\documentclass{amsproc}
\usepackage{euscript}
\usepackage{cases}
\usepackage{mathrsfs}
\usepackage{bbm}
\usepackage{amssymb}
\usepackage{amsfonts,amsmath,amsxtra,mathdots,mathabx}
\usepackage{color}
\usepackage{hyperref}
\usepackage{tikz}
\usepackage{appendix,upgreek}
\allowdisplaybreaks

\DeclareFontFamily{U}{matha}{\hyphenchar\font45}
\DeclareFontShape{U}{matha}{m}{n}{
	<5> <6> <7> <8> <9> <10> gen * matha
	<10.95> matha10 <12> <14.4> <17.28> <20.74> <24.88> matha12
}{}
\DeclareSymbolFont{matha}{U}{matha}{m}{n}

\DeclareMathSymbol{\Lt}{3}{matha}{"CE}
\DeclareMathSymbol{\Gt}{3}{matha}{"CF}

\DeclareSymbolFont{mathc}{OML}{txmi}{m}{it}
\DeclareMathSymbol{\varuu}{\mathord}{mathc}{117}
\DeclareMathSymbol{\varvv}{\mathord}{mathc}{118}
\DeclareMathSymbol{\varww}{\mathord}{mathc}{119}

\def\valpha{\text{\scalebox{0.84}[1.02]{$\alpha$}}}   
\def\vepsilon{\upvarepsilon}
\def\vnu{\text{{\scalebox{0.9}[1]{$\nu$}}}}

\newcommand{\BR}{{\mathbb {R}}} 
\newcommand{\BZ}{{\mathbb {Z}}}

\newcommand{\SL}{{\mathrm {SL}}}

\newcommand{\ra}{\rightarrow} 
\def\sumx{\sideset{}{^\star}\sum}

\def\nd{\mathrm{d}}

\def\lp {\left (}
\def\rp {\right )}

\def\shskip{\hspace{0.5pt}}

\newcommand{\red}[1]{\textcolor{red}{#1}}

\newcommand{\delete}[1]{}

\theoremstyle{plain}

\newtheorem{thm}{Theorem} \newtheorem{cor}[thm]{Corollary}
\newtheorem{lem}{Lemma}[section] \newtheorem{prop}[thm]{Proposition}

\theoremstyle{remark} 
\newtheorem{remark}{Remark}[section] 

\numberwithin{equation}{section}

\begin{document}
\title[Luo's Spectral Large Sieve Inequality, short-interval]{Luo's  Spectral Large Sieve Inequality on Short Intervals}   

	\author[Z. Qi  and R. Qiao]{Zhi Qi and Ruihua Qiao}
	\address{School of Mathematical Sciences\\ Zhejiang University\\Hangzhou, 310058\\China}
	\email{zhi.qi@zju.edu.cn, ruihua.qiao@zju.edu.cn}
	
	\thanks{The first author was supported by National Key R\&D Program of China (No. 2022YFA1005300) and the National Natural Science Foundation of China (No. 12671014).}
	
	\dedicatory{\normalsize On the occasion of Szu-Hoa Min's 115th birth anniversary}

	\subjclass[2020]{11M41, 11F72}
	\keywords{large sieve inequality, Poisson summation formula, Kuznetsov trace formula.}
	
	\maketitle
\begin{abstract}
Let $u_j $ traverse an orthonormal basis of Hecke--Maass  forms for  $\mathrm{SL}_2 (\BZ) $ with Hecke eigenvalues $\lambda_j (n)$ and Laplace eigenvalue $1/4+t_j^2$. In this paper, we consider the short-interval variant of the twisted spectral large sieve inequality of  Luo  for $ \lambda_j (n) n^{it_j} $ on the range $t_j \leqslant T$ and prove that the  `Eisenstein--Kloosterman' cancellation discovered by Luo is effective on the interval $ T < t_j \leqslant T + M $ as long as $\sqrt{T} < M \leqslant T$.  Moreover, our approach yields an improvement of the large sieve inequality of Luo. 
\end{abstract}

\section{Introduction}
Let $\{u_j (z)\}$ be an orthonormal basis of   Hecke--Maass cusp forms on  $\mathrm{SL}_2 (\BZ) \backslash \mathbb{H}^2$.   Let $\lambda_j = s_j (1-s_j)$  
	be the Laplace eigenvalue of $u_j (z)$,  
	with $s_j = 1/2+ i t_j$ ($t_j > 0$). 
		The Fourier expansion of $u_j (z)$ reads:
	\begin{align*}
		u_j (x+iy) =   \sqrt{y} \sum_{n \neq 0}  \rho_j (n) K_{i t_j} (2\pi |n| y) e (n x), 
	\end{align*}
	where as usual $K_{\vnu} (x)$ is the $K$-Bessel function and $e (x) = \exp (2\pi i x)$. Let $\lambda_j (n)$  be the $n$-th Hecke eigenvalue of $ u_j (z) $. It is well known that $ \rho_j (  n) =   \lambda_j (n) \rho_j (  1)  $ and $ \lambda_j (n) $ is real-valued for any $n \geqslant  1$. 
    Define the harmonic weight 
	\begin{align*}
		\omega_j = \frac {|\rho_j (1)|^2} {\cosh \pi t_j}.
	\end{align*}

    Deshouillers, Iwaniec, and Luo \cite{DI-Nonvanishing,Luo-Twisted-LS} established the following large sieve inequalities for the special twisted Hecke eigenvalues $ \lambda_{j} (n) n^{it_j}  $: 
\begin{align}\label{1eq: DI's bound}
	\sum_{t_j \leqslant T} \omega_j  \bigg| \sum_{ n \leqslant N}  a_{n} \lambda_{j} (n) n^{it_j} \bigg|^2 \Lt \big(T^2 +  N^{2}\big) (TN)^{\vepsilon} \sum_{ n \leqslant N}  |a_{n}|^2 ,  
\end{align}
\begin{align}\label{1eq: Luo's bound, 1}
	\sum_{t_j \leqslant T} \omega_j   \bigg| \sum_{ n \leqslant N}  a_{n} \lambda_{j} (n) n^{it_j} \bigg|^2 \hskip -2pt \Lt    \big(T^2      +       T^{3/2} N^{1/2}     +     N^{5/4} \big) (TN)^{\vepsilon} \hskip -2pt \sum_{ n \leqslant N} \hskip -2pt |a_{n}|^2,
\end{align}
for any complex $a_n$.  
Note that \eqref{1eq: Luo's bound, 1} improves \eqref{1eq: DI's bound} for $N > T$. This improvement is due to Luo's discovery   of the `Eisenstein--Kloosterman' cancellation. 

 It was stated without proof by
 Iwaniec \cite{Iwaniec-Spectral-Weyl} and proven independently by Luo \cite{Luo-LS} and Jutila \cite{Jutila-LS} that 
 \begin{align}\label{1eq: LS, M=1}
 	\sum_{T < t_j \leqslant T+1} \omega_j  \bigg| \sum_{ n \leqslant N}  a_{n} \lambda_{j} (n)  \bigg|^2 \Lt_{\vepsilon}  (T +  N  ) (TN)^{\vepsilon} \sum_{ n \leqslant N}  |a_{n}|^2 ,
 \end{align}
 while Luo observed that, by partial summation, \eqref{1eq: LS, M=1} is equivalent to its twisted variant:
 \begin{align}\label{1eq: LS, M=1, twisted}
 	\sum_{T < t_j \leqslant T+1} \omega_j  \bigg| \sum_{ n \leqslant N}  a_{n} \lambda_{j} (n) n^{it_j}  \bigg|^2 \Lt_{\vepsilon}  (T +  N  ) (TN)^{\vepsilon} \sum_{ n \leqslant N}  |a_{n}|^2 ;  
 \end{align}
 the twist  $n^{it_j}$ does not play a role because $t_j$ is restricted in a segment of unity length. Thus, for $1 \leqslant M \leqslant T$, it follows from \eqref{1eq: LS, M=1, twisted} that 
 \begin{align}\label{1eq: large sieve, short}
 	\sum_{ T < t_j \leqslant T+M }  \omega_j  \bigg| \sum_{     n \leqslant    { N} }  a_{n} \lambda_{j} (n) n^{it_j}  \bigg|^2  \Lt_{\vepsilon}  	M (  T+ N   ) (TN)^{\vepsilon}    \sum_{   n  \leqslant    {N} } |a_{n}|^2 . 
 \end{align}
This was also proven by the first author \cite{Qi-GL(3)-Special-Points} in a direct manner, with no detection of the   `Eisenstein--Kloosterman' cancellation. 

 Presumably, there should still be some effect of cancellation for shorter intervals  $ T < t_j \leqslant T+ M $, but it would disappear on the shortest interval $ T < t_j \leqslant T+ 1 $. 
 
 It is natural to ask for which values of $M$ the `Eisenstein--Kloosterman' cancellation is effective, thereby yielding an improvement on \eqref{1eq: large sieve, short}?    Our main theorem  manifests that such cancellation is effective on short intervals of length $ M > \sqrt{T}$. 
 
 \begin{thm}\label{thm: large sieve}
 	Let   $\sqrt{T} < M \leqslant T< \sqrt{M N}$. We have
 	\begin{align}\label{1eq: large sieve}
 		\sum_{ T< t_j \leqslant T+M }  \omega_j  \bigg| \sum_{     n \leqslant    { N} }  a_{n} \lambda_{j} (n) n^{it_j}  \bigg|^2  \Lt_{\vepsilon}\varLambda(M, T, N) N^{\vepsilon}\sum_{n\leqslant N}|a_n|^2,
 	\end{align}
 	for any complex numbers $a_n$, where 
 {	\begin{align}\label{1eq: Lambda}
 		\varLambda(M, T, N) =\begin{cases}
 			T\sqrt{MN},\  &T^2/ M < N \leqslant M^{3},  \\  
 			TN^{2/3},&M^{3} < N \leqslant T^{3},\\
 			N ,&T^{3} < N . 
 		\end{cases}
 	\end{align}}
 \end{thm}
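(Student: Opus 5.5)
By duality, it suffices to bound the bilinear form
\begin{align*}
\sum_{n, m \leqslant N} a_n \overline{a}_m \sum_{j} \omega_j\, h(t_j)\, \lambda_j(n)\lambda_j(m) (n/m)^{it_j},
\end{align*}
where $h$ is a smooth nonnegative weight that majorizes the indicator of $(T, T+M]$. For definiteness I would take $h(t) = \exp(-(t-T)^2/M^2) + \exp(-(t+T)^2/M^2)$. The twist is even in $t$ only after pairing $(n/m)^{it}$ with $(m/n)^{it}$, so I would use $h(t)(n/m)^{it} + h(-t)(n/m)^{-it}$ as the test function. Applying the Kuznetsov trace formula to this test function produces three terms.

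The diagonal term $\delta_{n,m}\int h(t)\, t\tanh(\pi t)\, dt$ contributes $MT\sum|a_n|^2$. This is admissible, since $MT \leqslant T\sqrt{MN}$ once $N > M$. The Eisenstein term contributes a continuous-spectrum integral of size at most $M(T+N)\cdot\ldots$; I would not bound it trivially. As in Luo's argument, it is to be kept and cancelled against the main part of the Kloosterman term. This is the `Eisenstein--Kloosterman' cancellation. The Kloosterman term is
\begin{align*}
\sum_c \frac{S(n,m;c)}{c}\, H\Big(\frac{4\pi\sqrt{nm}}{c}\Big), \qquad H(x)=\int J_{2it}(x)\,\frac{t\, h(t)(n/m)^{it}}{\cosh \pi t}\,dt .
\end{align*}
I would analyze $H$ via the integral representation of $J_{2it}-J_{-2it}$ and stationary phase in $t$. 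The twist $(n/m)^{it}$ shifts the stationary point. Because $h$ has width $M$, $H(x)$ is negligible unless $x\sqrt{?}$ lies in a specific range. The result is that $H(4\pi\sqrt{nm}/c)$ is essentially $MT\, e(\pm\,\cdot)$ times an oscillatory factor of the shape $e\big(\sqrt{nm}/c\cdot(\sqrt{n/m}+\sqrt{m/n})\big)=e((n+m)/c)$ up to lower-order phases, supported on $c \asymp \sqrt{nm}/T$ modulo the localization $|\log(n/m)| \lesssim$ with width $\approx T/M$ relative window. Next I would open the Kloosterman sum and apply Poisson summation in $c$ (or in $n, m$ modulo $c$). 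The zero frequency of the Poisson-dual sum reproduces the Eisenstein contribution with opposite sign and cancels it. This step mirrors Luo's argument; the short-interval weight only affects the size of the Fourier transforms.

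The remaining nonzero frequencies give the error terms. After Poisson, the dual sum has length roughly $N c/\ldots$, and the transformed weight is controlled by the second derivative of the phase. For this I would use the condition $M > \sqrt{T}$: it guarantees that the Gaussian localization of width $M$ in $t$ dominates the curvature term $t^2/T$, so that a single stationary-phase expansion in $t$ is valid with a controlled error. Estimating the dual bilinear form via the classical additive large sieve, or via a Weil bound together with Cauchy--Schwarz, gives the three regimes. In the range $N \leqslant M^3$, the large sieve over moduli $c \lesssim N/T$ with localization yields $T\sqrt{MN}$. In the range $M^3 < N \leqslant T^3$, optimizing an auxiliary dyadic splitting (shortening the effective interval to length $M' = N^{1/3} \leqslant T$ and covering $(T,T+M]$ by such intervals if beneficial) yields $TN^{2/3}$. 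For $N > T^3$, the trivial $N$ bound from the previous case with $M' = T$ gives $N$.

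I expect the main obstacle to be the uniform asymptotic analysis of the Bessel transform $H$ with the twisted short-interval weight. Its leading term must match the Eisenstein term exactly so that the cancellation goes through, and its secondary terms must remain admissible as long as $M > \sqrt{T}$. My first attempt would be to write $J_{2it}$ via the Mellin–Barnes or Fourier integral $\int \cos(x\cosh v - 2tv)\,dv$, integrate in $t$ first (which gives the Fourier transform of $h$, concentrated at $|v - \tfrac12\log(n/m)| \lesssim 1/M$), and Taylor expand $\cosh v$ around the resulting point. The quadratic term then has size $x/M^2 \approx T/M^2 < 1$ exactly when $M > \sqrt{T}$.
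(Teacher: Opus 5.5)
\textbf{Gaps.} Your overall framework is the paper's: a smoothed Kuznetsov formula with test function $h(t)\cos(t\log(n/m))$, keeping the Eisenstein term for cancellation, and integrating in $t$ first so that $h$ localizes the Bessel integral to $|r|\leqslant M^{\varepsilon}/M$. However, the two load-bearing steps would not go through as you describe them.

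First, the Bessel transform is not supported on $c\asymp\sqrt{mn}/T$. With $v=\pi(m+n)/c$ and $w=\pi(m-n)/c$, the localization is $\big||w|-T\big|\ll T^{\varepsilon}(M+v/M)$. So every $c\ll N/T$ contributes, and $v$ can be as large as $N$. The quadratic part $v(\cosh r-1)$ of the phase, at $r\asymp 1/M$, then has size $N/(cM^2)$ rather than $T/M^2$. The approximation $I(v,w)=2wf(w)/\pi+O(Tv/M^2)$ is therefore worthless pointwise once $c<N/M^2$. This is exactly the range where Luo's Taylor-expansion argument has to fall back on the Weil bound and loses. So $M>\sqrt T$ does not make a single stationary-phase expansion uniformly valid.

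Instead, the paper uses $a_\pm(x;r)-1\ll\min\{1,xr^2\}$, where $a_\pm(x;r)=e\big(x(e^{\pm r}\mp r-1)\big)$. It separates $m$ and $n$ through $a_+(m/c;r)\big(a_-(n/c;r)-1\big)+\big(a_+(m/c;r)-1\big)$. It then applies the bilinear bound $\sum\sum a_m\bar a_nS(m,n;c)\ll(c+\Delta)\|\mathcal{A}_{M,\Delta}\|\|\mathcal{A}_{N,\Delta}\|$ on squares of side $\Delta=T^{\varepsilon}\max\{Mc,N/M\}$ lying near $|m-n|=cT/\pi$. This gives $TN/(Mc)$ per modulus and $TN/M$ in total. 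The condition $M>\sqrt T$ is simply what makes this aggregate error beat the trivial bound $MN$.

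Second, the Poisson step is not available in the form you state. The $a_n$ are arbitrary, so there is no Poisson summation in $m,n$, and $S(m,n;c)$ is not periodic in $c$. The missing idea is Luo's identity $S(m,n;c)\,e\big((m+n)/c\big)=\sum_{qr=c}V_q(m,n;r)$. It absorbs precisely the phase $e((m+n)/c)$ produced by the Bessel analysis. It is followed by Poisson in $q$ modulo $r$, using $\sum_{\alpha \bmod r}V_\alpha(m,n;r)e(\alpha q/r)=S(m,q;r)S(n,q;r)$. The zero frequency yields $\frac{2}{\sqrt\pi}M\sum_{r}r^{-2}\big(\sum_n a_nS(n,0;r)\big)^2$. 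This matches the Eisenstein term through Ramanujan's expansion $\sigma_{1-s}(n)/\zeta(s)=\sum_c S(n,0;c)c^{-s}$, which is where the cancellation actually happens.

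Before Poisson one must truncate to $r\leqslant X$ via Young's hybrid large sieve, at cost $MN/X$, because the nonzero frequencies cost $T^2X$. Balancing with $X=\min\{M,\sqrt{MN}/T\}$ is what produces $T\sqrt{MN}$, a term your proposal asserts without any mechanism. Together with the $TN/M$ above, the total is $(\sqrt N/M+\sqrt M)\,T\sqrt N$.

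Finally, when $N>M^3$ one has $N^{1/3}>M$. The correct move is therefore to enlarge $(T,T+M]$ to $(T,T+M']$ by positivity and minimize $TN/M'+T\sqrt{M'N}$ over $M'\geqslant M$, which gives $M'=N^{1/3}$, or $M'=T$ when $N>T^3$. Covering by shorter intervals, as you suggest, only worsens the bound.
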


Note that for $N > T^2/M  $  the bound given by \eqref{1eq: large sieve} and \eqref{1eq: Lambda} is stronger than the bound in \eqref{1eq: large sieve, short}. 

Moreover, if we choose $M = T$, then Luo's large sieve inequality \eqref{1eq: Luo's bound, 1} may be improved as follows.  This is because the use of Taylor approximation as in \cite[\S 3]{Luo-Twisted-LS} is bypassed (see Remark \ref{rem: Taylor}) and the contribution from holomorphic modular forms as in \cite[\S 4]{Luo-Twisted-LS} does not arise in our setting.   

\begin{cor}
	 We have 
	 \begin{align}\label{1eq: Luo's bound, 2}
	 	\sum_{t_j \leqslant T} \omega_j   \bigg| \sum_{ n \leqslant N}  a_{n} \lambda_{j} (n) n^{it_j} \bigg|^2 \hskip -2pt \Lt    \big(T^2      +       T^{3/2} N^{1/2}     +     N  \big) (TN)^{\vepsilon} \hskip -2pt \sum_{ n \leqslant N} \hskip -2pt |a_{n}|^2,
	 \end{align}
 for any complex numbers $a_n$. 
\end{cor}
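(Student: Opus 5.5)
The corollary will follow from Theorem \ref{thm: large sieve} with $M = T$, after splitting the spectral range dyadically and handling the small ranges by \eqref{1eq: DI's bound}.

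\emph{Dyadic decomposition.} Split $t_j \leqslant T$ into $O(\log T)$ ranges $T_0 < t_j \leqslant 2T_0$ with $T_0 = 2^{-k} T$. Since $(TN)^{\vepsilon}$ absorbs the logarithm, it suffices to bound each range by $(T_0^2 + T_0^{3/2}N^{1/2} + N)(TN)^{\vepsilon}\sum|a_n|^2$. Each such bound is at most the right-hand side of \eqref{1eq: Luo's bound, 2}, because the expression is increasing in $T_0$.

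\emph{Case analysis for a fixed $T_0$.} Apply Theorem \ref{thm: large sieve} with $T = M = T_0$. The hypothesis $\sqrt{T_0} < T_0 \leqslant T_0$ holds once $T_0 > 1$. The remaining hypothesis $T_0 < \sqrt{MN}$ becomes $T_0 < \sqrt{T_0 N}$, that is, $N > T_0$.

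\begin{itemize}
\item If $N \leqslant T_0$, the bound \eqref{1eq: DI's bound} already gives $(T_0^2 + N^2) \ll T_0^2$.
\item For $N > T_0$, the thresholds $T^2/M = T_0$ and $M^3 = T^3 = T_0^3$ mean the middle case of \eqref{1eq: Lambda} is empty. We are left with two cases.
\begin{itemize}
\item For $T_0 < N \leqslant T_0^3$, we get $\varLambda = T_0\sqrt{T_0 N} = T_0^{3/2}N^{1/2}$.
\item For $N > T_0^3$, we get $\varLambda = N$.
\end{itemize}
\end{itemize}

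In every case the bound is $\ll T_0^2 + T_0^{3/2}N^{1/2} + N$. Bounded $T_0$ is handled trivially by \eqref{1eq: DI's bound}, or by \eqref{1eq: large sieve, short} with $M \asymp 1$, which gives $O(T_0 + N)$.

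\emph{Bookkeeping.} Theorem \ref{thm: large sieve} sums over $T_0 < t_j \leqslant 2T_0$, which matches our dyadic blocks exactly. Its loss is $N^{\vepsilon}$, which is at most $(TN)^{\vepsilon}$. Summing the $O(\log T)$ blocks completes the proof.

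The only delicate point is checking that the theorem's hypotheses degenerate correctly at $M = T$, in particular that the middle regime of \eqref{1eq: Lambda} disappears. All the real analytic work, namely the Eisenstein--Kloosterman cancellation without Taylor approximation or a holomorphic-form contribution, is contained in Theorem \ref{thm: large sieve} itself.
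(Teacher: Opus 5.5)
Your proposal is correct and follows the paper's route: the paper obtains the corollary simply by taking $M=T$ in Theorem~\ref{thm: large sieve} on dyadic blocks $(T_0,2T_0]$, which is the same as taking $M=T$ in \eqref{1eq: large sieve, 2}; there the bound reads directly $N+T^{3/2}N^{1/2}$, so your case split is not even needed, and the range $N\leqslant T_0$ is covered by \eqref{1eq: DI's bound}. As you note, all the substantive gain over Luo's $N^{5/4}$ is already contained in the theorem.
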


As remarked in \cite[\S 1.2]{Qi-GL(3)-Special-Points}, if $\phi$ is a fixed Hecke--Maass form for $\mathrm{GL}_3$, then $ M = \sqrt{T} $ is a natural barrier in the subconvexity problem for the `conductor-dropping' Rankin--Selberg special $L$-values $ L (1/2+it_j, \phi \times u_j ) $. However,  our Theorem \ref{thm: large sieve} has just reached this barrier, and it is not yet clear whether the `Eisenstein--Kloosterman' cancellation is still effective beyond $ M  = \sqrt{T} $.

 \subsection*{Setup}

 Let $\mathcal{A}$ be a real sequence $a_n$ with $a_n=0$ unless $N<n\leqslant 2N$ (for the real-valued assumption, the reader is referred to \cite[Remark 1.1]{Qi-GL(3)-Special-Points}); subsequently, we shall write the support as $n \sim N$ and  
 denote 
 \begin{align*}
     \| \mathcal{A} \|^2=\sum_{  n \sim  N} a_n^2.
 \end{align*} 
For the proof of Theorem \ref{thm: large sieve}, we shall deal with the smoothed sum
\begin{align}  \label{1eq: S, cusp}	
	 {S} (\mathcal{A})    =	   \sum_{j = 1 }^{\infty}   \omega_j { h  ( t_j ) }   \bigg| \sum_{ n \sim  N }  a_{n} \lambda_{j} (n) n^{i t_j}  \bigg|^2   ,  
\end{align}
with  
\begin{equation}\label{1eq: weight k} 
	h ( t  ) = \exp \bigg( \hspace{-1pt}  -  \frac {(t - T)^2} {M^2}    \bigg) + \exp \bigg(  \hspace{-1pt} -  \frac {(t + T)^2} {M^2}    \bigg),
\end{equation} 
and the corresponding Eisenstein contribution
\begin{align}	\label{1eq: E, Eis}		  {T} (\mathcal{A})    =	   \frac 1 {\pi} 	\int_{-\infty}^{\infty}   \frac{h (t)}    {|\zeta(1+2it)|^2} \bigg| \sum_{ n \sim  N }   {a}_{n} \sigma_{  2 i t} (n)   \bigg|^2  \nd t ,
\end{align} 
where $\sigma_{\vnu}(n)$ is the divisor function
\begin{align*}
    \sigma_{\vnu}(n)=\sum_{d|n}d^{\vnu}. 
\end{align*}

\subsection*{Strategy for the Proof of Theorem \ref{thm: large sieve}}
Define the bilinear form
\begin{align}\label{1eq: Sigma(A)}
	&\varSigma(\mathcal{A})=M\mathop{\sum\sum}_{m,n} a_m a_n \sigma(m,n), \qquad \sigma(m, n) = \sum_{c=1}^{\infty}   \frac {S(m, 0; c) S(n, 0; c)} {c^2}, 
\end{align}
where
$S(m, 0; c)$  is the Ramanujan sum.

The idea for the proof of Theorem \ref{thm: large sieve} is to extract the same main term—a multiple of $\varSigma (\mathcal{A})$—from the Eisenstein series contribution $T(\mathcal{A})$ and the Kloosterman part in ${S} (\mathcal{A})+T(\mathcal{A})$ after the Kuznetsov formula, and then do the cancellation. More precisely, we shall prove the following asymptotic formulae.

\begin{prop}\label{prop: Kloosterman}
	Let $N^{\vepsilon} \leqslant M \leqslant T^{1-\vepsilon}$ and $ T < \sqrt{MN}$.  Then
	\begin{equation}\label{1eq: Kloosterman}
		{S} (\mathcal{A})+T(\mathcal{A})=\frac{2}{\sqrt{\pi}} \varSigma(\mathcal{A}) + O\bigg(\bigg(     \frac{  \sqrt {N}}{M}  + \sqrt{M }    \bigg) T \sqrt{N}  N^\vepsilon\|\mathcal{A}\|^2\bigg).
	\end{equation}
\end{prop}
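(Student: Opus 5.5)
Expand the square in $S(\mathcal{A})+T(\mathcal{A})$ and apply the Kuznetsov trace formula for $\SL_2(\BZ)$ with the test function $h(t)\,(m/n)^{it}$. This function is even in the combined sense that $t\mapsto -t$ swaps $(m/n)^{it}$ and $(n/m)^{it}$; because the $a_n$ are real, we may symmetrize and use $h(t)\cos(t\log(m/n))$ instead. Kuznetsov then splits $S(\mathcal{A})+T(\mathcal{A})$ into two pieces.

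\emph{The diagonal piece.} This is $\frac{1}{\pi^2}\sum_n a_n^2\int h(t)\,t\tanh(\pi t)\,\nd t \Lt MT\|\mathcal{A}\|^2$. Since $M\le T$ and $T<\sqrt{MN}$ gives $M<\sqrt{MN}$, this is within the error term $\sqrt{M}\,T\sqrt N\,\|\mathcal{A}\|^2$.

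\emph{The Kloosterman piece.} This is
\[
\sum_{m,n}a_ma_n\sum_{c}\frac{S(m,n;c)}{c}\,H\Big(\frac{4\pi\sqrt{mn}}{c};\frac mn\Big),
\qquad
H(x;y)=\frac{2i}{\pi}\int J_{2it}(x)\,\frac{h(t)\,y^{it}\,t}{\cosh\pi t}\,\nd t .
\]

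Next I analyse the Bessel transform. Insert the integral representation $J_{2it}(x)-J_{-2it}(x)\propto \int \cos(x\cosh v-2tv)\,\nd v$ and integrate over $t$ first. The $t$-integral is essentially a Fourier transform of $h(t)\,t$ at frequency $2v\pm\log(m/n)$. It localizes $v$ to within $1/M$ of $\tfrac12\log(n/m)$, up to negligible error, and produces a factor $MT$. The result is
\[
H(x;m/n)\approx MT\int e\Big(\pm\frac{x\cosh v}{2\pi}\Big)\,g\big(M(2v-\log(n/m))\big)\,\nd v ,
\]
with $g$ a Gaussian. Substituting $x=4\pi\sqrt{mn}/c$ and $\cosh v\approx(m+n)/(2\sqrt{mn})$ gives the phase $e(\pm(m+n)/c)$. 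The remaining oscillation is roughly $x(\Delta v)^2\sim \sqrt{mn}/(cM^2)$. So for $c\gg N/M^2$ the transform is essentially $MT\cdot e(\pm(m+n)/c)$ times a smooth weight. For smaller $c$ the transform has $|\,\cdot\,|\Lt T\sqrt{c/N}$ by stationary phase. Also, when $c$ is large, namely $c\gg \sqrt N/T$ or so, $x<T$ and the $J$-Bessel is tiny, which cuts off the $c$-sum. This mirrors Luo's analysis, but without the Taylor step noted in Remark~\ref{rem: Taylor}.

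The key step is to rewrite $\sum_{m,n}a_ma_n\, S(m,n;c)\,e(\pm(m+n)/c)\,w(\cdot)$ and apply Poisson summation. Opening $S(m,n;c)$ and combining with $e((m+n)/c)$ gives $\sum_{\alpha}{}^{*}\, e((\bar\alpha+1)(m+\dots)/c)$-type sums, i.e.\ additive twists. I would apply Poisson in $c$, which is a long variable; this is the `Eisenstein--Kloosterman' mechanism. Write $\sum_c c^{-1}S(m,n;c)e((m+n)/c)\Phi(c)$, open the Kloosterman sum, and use the Estermann-type identity to dualize. The zero frequency then yields exactly the Ramanujan-sum double series $\sigma(m,n)$ with the constant $\frac{2}{\sqrt\pi}M$; the $\sqrt\pi$ comes from $\int g$. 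The nonzero frequencies, together with the Eisenstein integral $T(\mathcal{A})$ (whose continuous-spectrum Kuznetsov term supplies the counterpart cancelling the secondary term), should be bounded using the classical large sieve $\sum_c\sum_{a}^{*}|\sum a_n e(an/c)|^2\Lt (C^2+N)\|\mathcal{A}\|^2$. With $C\approx\sqrt N T/M$ the non-zero frequencies give $T\cdot N/M\,\|\mathcal{A}\|^2$, and the small-$c$ range gives $\sqrt M\,T\sqrt N\|\mathcal{A}\|^2$.

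The main obstacle is making the zero-frequency extraction exact. We must show that the main terms coming from the Kloosterman side and from the Eisenstein side combine to precisely $\frac{2}{\sqrt\pi}\varSigma(\mathcal{A})$, while keeping the error at $(\sqrt N/M+\sqrt M)T\sqrt N$. I would first treat the range $c\le N/M^2$ trivially with Weil's bound and the large sieve; this is the source of the $\sqrt M$ term, and I expect it requires the conditions $M\ge N^{\vepsilon}$ and $T<\sqrt{MN}$. The range $c>N/M^2$ is where the careful Poisson analysis goes.
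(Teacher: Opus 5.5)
\textbf{Verdict: genuine gap.} Your outline has the right shape: Kuznetsov with $h(t)\cos(t\log(m/n))$, then a Bessel transform carrying $e((m+n)/c)$, then a dualization whose zero frequency yields Ramanujan sums. But the step that produces both the main term and the error is not supplied.

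\textbf{The dualization step.} `Poisson in $c$' is not available as you describe it. The variable $c$ is the modulus of $S(m,n;c)$, so $\sum_c c^{-2}S(m,n;c)e((m+n)/c)\Phi(c)$ has no periodicity in $c$, and an unnamed `Estermann-type identity' does not fill this. The missing idea is Luo's identity \eqref{2eq: S = V}, $S(m,n;c)e((m+n)/c)=\sum_{qr=c}V_q(m,n;r)$.
\begin{itemize}
\item It factors the modulus, and $V_q(m,n;r)$ is $r$-periodic in $q$, so one applies Poisson in the long variable $q$ modulo the short modulus $r$.
\item By \eqref{2eq: Fourier of V}, the dual frequencies carry $S(m,q;r)S(n,q;r)$. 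So $q=0$ gives the Ramanujan sums, and via $\int_0^\infty f=\sqrt{\pi}M$ it gives exactly $\frac{2}{\sqrt{\pi}}\varSigma(\mathcal{A})$.
\item To make this efficient you must first truncate to $r\leqslant X$. The range $r>X$ costs $MN/X$ by Young's hybrid large sieve (Lemma \ref{lem: Young's LS}).
\item The frequencies $q\neq 0$ are bounded by the same hybrid large sieve using $\hat\phi(r/\pi;v)\Lt T^2/|v|$, at a cost of $T^2X$.
\item Choosing $X=\min\{M,\sqrt{MN}/T\}$ is precisely where $\sqrt{M}\,T\sqrt{N}=T\sqrt{MN}$ comes from.
\end{itemize}
None of this truncation and balancing is in your plan. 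The classical large sieve with `$C\approx \sqrt{N}T/M$' has no derivation behind it.

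\textbf{The error bookkeeping.} Your attribution of the error terms is wrong. The range $c\leqslant N/M^2$ cannot produce $\sqrt{M}\,T\sqrt{N}$.
\begin{itemize}
\item There the stationary-phase size is $|I|\Lt MT\sqrt{c/N}$; your bound $T\sqrt{c/N}$ drops a factor $M$.
\item The support is $|m-n\mp cT/\pi|\Lt T^{\vepsilon}N/M$.
\item So with Weil's bound each $c$ contributes about $T\sqrt{N}\|\mathcal{A}\|^2$, and the total is $TN^{3/2}/M^2$. This exceeds the target once $N>M^{5/2}$.
\end{itemize}
The paper instead keeps the main term $\frac{2}{\pi}wf(w)$ of the transform for \emph{every} $c$. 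It is needed in full so that the $q$-sum after Luo's identity is complete. Only $P(\mathcal{A})-Q(\mathcal{A})$ is bounded, as follows.
\begin{itemize}
\item Separate $m,n$ in $\exp(2i\psi)-1$ as $a_+(m/c;r)(a_-(n/c;r)-1)+(a_+(m/c;r)-1)$.
\item Cut into squares of side $\varDelta=T^{\vepsilon}\max\{Mc,N/M\}$.
\item Apply the short-interval bound \eqref{2eq: quad form, Kloosterman, 1}.
\end{itemize}
This gives $TN/(Mc)$ per $c$, hence the $TN/M$ term.

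\textbf{Smaller points.}
\begin{itemize}
\item The transform's main term is not `$MT\cdot e(\pm(m+n)/c)$ times a smooth weight'. It is $\frac{2}{\pi}wf(w)$, of size $T$ and localized to $|m-n|=cT/\pi+O(cMT^{\vepsilon})$. This localization comes from the $\cos(2Tr)$ oscillation in the kernel $g(r)$ of \eqref{3eq: defn of beta0 (r)}, which you discarded by calling $g$ a Gaussian.
\item $T(\mathcal{A})$ sits on the spectral side of Kuznetsov. Nothing in the proof of this proposition involves the Eisenstein term or any cancellation with it; that happens only when it is combined with Proposition \ref{prop: Eisenstein}.
\item The $c$-cutoff is $c\asymp N/T$ (since $x\asymp N/c$), not $\sqrt{N}/T$.
\end{itemize}
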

 
\begin{prop}\label{prop: Eisenstein}
	Let $N^{\vepsilon} \leqslant M \leqslant T  < N$. 
	Then 
	\begin{equation}\label{1eq: Eisenstein}
		T(\mathcal{A}) = \frac{2}{\sqrt{\pi}} \varSigma(\mathcal{A}) + O\big(N^{1+\vepsilon} \|\mathcal{A}\|^2\big). 
	\end{equation}
\end{prop}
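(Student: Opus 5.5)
Our plan is to open the square in $T(\mathcal{A})$ and evaluate the $t$-integral directly. We write
\begin{align*}
T(\mathcal{A}) = \frac1\pi \mathop{\sum\sum}_{m,n} a_m a_n \int_{-\infty}^{\infty} \frac{h(t)}{|\zeta(1+2it)|^2}\, \sigma_{2it}(m)\, \sigma_{-2it}(n)\, \nd t ,
\end{align*}
which uses that $\sigma_{2it}(n)$ has conjugate $\sigma_{-2it}(n)$ and that $a_n$ is real. The key input is the Ramanujan expansion $\sigma_{2it}(n)/\zeta(1+2it) = n^{2it}\sum_{c\geqslant1} S(n,0;c)\, c^{-1-2it}$, and similarly for the conjugate; both are valid on the line $\mathrm{Re}=0$ because the Ramanujan-sum series converges there conditionally. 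To avoid convergence issues, we first shift to a line slightly to the right, or truncate at $c \leqslant C$ with an error controlled by the zero-free region, giving $\log$ losses only. Since $h$ is essentially supported on $|t\mp T|\lesssim M N^{\vepsilon}$ and $T\geqslant M\geqslant N^\vepsilon$, we may replace $1/|\zeta|^2$ there by the product of two truncated Dirichlet-polynomial expansions, at cost $N^\vepsilon$.

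Inserting the expansions, the integral becomes
\begin{align*}
\sum_{c,d} \frac{S(m,0;c)\, S(n,0;d)}{cd} \int h(t) \Big(\frac{m d}{n c}\Big)^{2it} \nd t
= \sum_{c,d} \frac{S(m,0;c)\, S(n,0;d)}{cd}\, \hat h\Big(\frac{1}{\pi}\log\frac{md}{nc}\Big),
\end{align*}
up to normalisation of the Fourier transform. Here $\hat h(x)\approx 2\sqrt{\pi} M \cos(2\pi T x)\, e^{-\pi^2 M^2 x^2}$, so it is negligible unless $|\log(md/nc)| \leqslant N^{\vepsilon}/M$.

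We then split according to whether $md = nc$. The \emph{diagonal} $md=nc$ gives $\hat h(0) = 2\sqrt{\pi}M$. For the main term we further restrict to $c=d$ with $m=n$, or more precisely we reorganize. After dividing by $\pi$ the total diagonal should be $\tfrac{2}{\sqrt\pi}M\sum_{c} S(m,0;c)S(n,0;c)/c^2 = \tfrac{2}{\sqrt\pi}\varSigma(\mathcal{A})$ when $c=d$, $m=n$. The remaining diagonal terms $md=nc$ with $c\neq d$, together with the exact bookkeeping needed to match $\sigma(m,n)$ (which sums $c$ with both Ramanujan sums at the same modulus), are the delicate point. We expect instead to use the alternative exact identity $\int$ over $t$ replaced by Mellin/Dirichlet convolution: expand $\sigma_{2it}(m)\sigma_{-2it}(n)=\sum_{a|m,\,b|n}(a/b)^{2it}$ and $1/|\zeta|^2=\sum \mu(k)\mu(l) k^{-1-2it} l^{-1+2it}$. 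The terms with $ak=bl$ produce $M\cdot$ an arithmetic sum, which one verifies equals $\sigma(m,n)$ by multiplicativity: both sides are multiplicative in the local factors at each prime.

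The \emph{off-diagonal} terms $ak\neq bl$ require $|ak/bl-1|\ll N^{\vepsilon}/M$. This is where $N>T$ and the error $N^{1+\vepsilon}$ enter. We bound them by Cauchy--Schwarz together with a Hilbert-type (Montgomery--Vaughan) inequality for the Dirichlet polynomial $\sum_n a_n \sigma_{2it}(n)$ integrated over the window $|t-T|\lesssim M$. That inequality gives $(M+N)N^\vepsilon\|\mathcal{A}\|^2$ for the full mean value, and the diagonal piece of size $M$ is already isolated, leaving $O(N^{1+\vepsilon}\|\mathcal{A}\|^2)$. The tail in $k,l$ of the Möbius expansion, which converges only conditionally, is handled by truncating $k,l\leqslant N^{A}$ after using that $1/\zeta(1+2it)\ll \log T$ and a smooth truncation. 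We expect this truncation and the precise identification of the diagonal with $\varSigma$ to be the main obstacle.
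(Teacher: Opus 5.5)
Your main-term identification is essentially correct once you switch to the four-variable expansion. One slip: with the exponents you wrote, the phase is $(al/bk)^{2it}$, so the diagonal is $al=bk$, not $ak=bl$. The two arithmetic sums agree under $k\leftrightarrow l$, so no harm is done. Grouping $c=al=bk$ and using $S(m,0;c)/c=\sum_{a\mid (m,c)}\mu(c/a)\,a/c$ gives $\sum_c S(m,0;c)S(n,0;c)/c^2=\sigma(m,n)$ directly. The paper gets this in one step by pairing $\sigma_{2it}(m)$ with $1/\zeta(1-2it)$ and $\sigma_{-2it}(n)$ with $1/\zeta(1+2it)$. The phase is then $(b/c)^{2it}$ in the Ramanujan moduli alone, with no $m,n$ in it, which avoids the wrong diagonal $md=nc$ of your first pairing.

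The genuine gap is the off-diagonal. You claim that Montgomery--Vaughan for $\sum_n a_n\sigma_{2it}(n)$ bounds the full mean value by $(M+N)N^{\vepsilon}\|\mathcal{A}\|^2$; this is false.
\begin{itemize}
\item The divisor $d=1$ of $\sigma_{2it}(n)$ does not oscillate. Already for $a_n\equiv 1$ the mean value over a window of length $M$ is $\asymp MN^2=MN\|\mathcal{A}\|^2$, and $\varSigma(\mathcal{A})\geqslant MN\|\mathcal{A}\|^2$ from $c=1$ alone.
\item So the diagonal is not ``of size $M$''. It is the large main term the paper later cancels, far above the target error $N^{1+\vepsilon}\|\mathcal{A}\|^2$.
\item Even with a correct bound for the total, subtracting an isolated diagonal from an upper bound never bounds the off-diagonal. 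You need a direct estimate of the terms with $al\neq bk$.
\end{itemize}

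What works is to regroup as above into $\beta_c=c^{-1}\sum_n a_nS(n,0;c)$. The off-diagonal is then $\sum_{b\neq c}\beta_b\beta_c\,\kappa(\log(b/c))$, where $\kappa(r)=\int h(t)e^{2itr}\,\nd t=2\sqrt{\pi}Me^{-M^2r^2}\cos(2Tr)$; this is negligible unless $|\log(b/c)|\leqslant N^{\vepsilon}/M$. Use AM--GM and symmetry. Each $c$ has $\ll cN^{\vepsilon}/M$ such $b\neq c$, and none unless $c\gg MN^{-\vepsilon}$. The off-diagonal is therefore $\ll N^{\vepsilon}\sum_c c^{-1}\big(\sum_n|a_nS(n,0;c)|\big)^2$. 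The elementary bound $\sum_{c\leqslant C}\big(\sum_n|a_nS(n,0;c)|\big)^2\ll CN\log C\sum_n|\tau(n)a_n|^2$ on dyadic ranges then gives $N^{1+\vepsilon}\|\mathcal{A}\|^2$. So a Montgomery--Vaughan-type argument does work, but for the Dirichlet series $\sum_c\beta_c c^{-2it}$, which already contains $1/\zeta$, not for $\sum_n a_n\sigma_{2it}(n)$.

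Finally, truncating the M\"obius series at $k,l\leqslant N^A$ is not justified by $1/\zeta(1+2it)\ll\log T$. The zero-free region (Titchmarsh \S\S 3.12--3.13) gives truncation at $\log c\leqslant N^{\vepsilon}$ with error $O(nN^{\vepsilon}/(MN))$ for $T\ll MN$. That suffices, because the counting argument above only sees $\log C$ and the number of dyadic blocks.
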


It follows from Propositions  \ref{prop: Kloosterman} and \ref{prop: Eisenstein}   that  
\begin{align}\label{1eq: large sieve, 2}
	\sum_{ T< t_j \leqslant T+M } \! \omega_j  \bigg| \! \sum_{     n \leqslant    { N} } \! a_{n} \lambda_{j} (n) n^{it_j}  \bigg|^2    \Lt_{\vepsilon}  \bigg(  \frac{  \sqrt {N}}{M}  + \sqrt{M }    \bigg) T \sqrt{N}  N^\vepsilon  \sum_{n\leqslant N}|a_n|^2 ,
\end{align}
for   $ 1 <  M \leqslant T < \sqrt{MN}$;  here,  for $T^{1-\vepsilon} < M \leqslant T $, one may always  divide $(T, T+M]$ into $O (T^{\vepsilon})$ many intervals of length $ M / T^{\vepsilon} $. \delete{However, observe that \eqref{1eq: large sieve, 2} is better than \eqref{1eq: large sieve, short} only if
\begin{align}
	T^{4/7} < M \leqslant T, \qquad \frac {T^2} {M} < N < \frac {M^6} {T^2} . 
\end{align}}

Finally, to deduce \eqref{1eq: large sieve} and \eqref{1eq: Lambda} from \eqref{1eq: large sieve, 2}, we use the observation that the spectral sum on the left of \eqref{1eq: large sieve, 2} is non-decreasing in $M$,  so the proof is completed if $M$ is enlarged to   
{\begin{align*}
	  \begin{cases} 
		N^{1/3}, \ & \text{ if } M^{3} < N \leqslant T^{3},\\
		T,&  \text{ if } T^{3} < N   .
	\end{cases}
\end{align*}}

Moreover, we remark that \eqref{1eq: large sieve} and \eqref{1eq: Lambda} may also be deduced from \eqref{1eq: large sieve, short}  in the case $M < N^{\vepsilon}$; hence,   the condition $ N^{\vepsilon} \leqslant M $ is imposed in Propositions \ref{prop: Kloosterman} and \ref{prop: Eisenstein} so that $M$ and $T$ are not excessively small relative to $N$.

\subsection*{Notation}   
By $X \Lt Y$ or $X = O (Y)$ we mean that $|X| \leqslant c Y$  for some constant $c  > 0$, and by $X \asymp Y$ we mean that $X \Lt Y$ and $Y \Lt X$. We write $X \Lt_{\valpha, \beta, ...} Y $ or $  X = O_{\valpha, \beta, ...} (Y) $ if the implied constant $c$ depends on $\valpha$, $\beta$, ....  


By `negligible' or `negligibly small' we mean $ O_A ( T^{-A} )$ for arbitrarily large but fixed $A > 0$. 

Throughout the paper,  $\vepsilon  $ is arbitrarily small and its value  may differ from one occurrence to another.

\section{Preliminaries}

\subsection{Exponential Sums}

Let $e (x) = \exp (2\pi i x)$. 	For integers $  m,  n , q$ and   $c  \geqslant 1$,      define 
\begin{align}\label{1eq: defn Kloosterman}
	S   (m, n ; c ) = \sumx_{   \valpha      (\mathrm{mod} \, c) } e \bigg(   \frac {  \valpha     m +   \widebar{\valpha    } n} {c} \bigg) ,
\end{align} 
\begin{align} \label{2eq: defn V}
	V_{q} (m, n; c) =	\mathop{\sum_{\valpha     (\mathrm{mod}\, c)}}_{ (\valpha     (q-\valpha    ), c) = 1 }  e \bigg(   \frac {  \widebar{\valpha    } m +  \overline{q - \valpha    } n } {c} \bigg),
\end{align}
where the $\star$ indicates the condition $(\valpha    , c) = 1$ and $ \widebar{\valpha    }$ is given by $\valpha     \widebar{\valpha    }  \equiv 1 (\mathrm{mod} \, c)$. The definition of $V_{q} (m, n; c)$ is essentially from Iwaniec--Li  \cite[(2.17)]{Iwaniec-Li-Ortho}. Note that the Kloosterman sum $S (m,n;c)$ is real-valued.

\begin{lem}\label{lem: S = V}
	We have \begin{equation}\label{2eq: S = V}
		S (m, n; c)  e\Big(\frac {m+n} {c} \Big) = \sum_{qr = c } V_q (m, n; r),
	\end{equation}
and
	\begin{equation}\label{2eq: Fourier of V}
	\sum_{\valpha (\mathrm{mod}\,c) }V_\valpha(m,n;c)e\Big(\frac{\valpha q}{c}\Big)=S(m,q;c)S(n,q;c).
\end{equation}
\end{lem}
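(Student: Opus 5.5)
The lemma is a pair of elementary identities for finite exponential sums, and I would prove both by reparametrising the summation variables. Throughout, $m,n,q$ are arbitrary integers and $c\geqslant 1$.

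\textbf{Identity \eqref{2eq: Fourier of V}.} This is the easier one, and I would do it first. In $V_\valpha(m,n;c)$ rename the inner variable $\beta$. Write $\gamma=\valpha-\beta$, so that $\beta,\gamma$ run over the units modulo $c$. Summing over all $\valpha \ (\mathrm{mod}\, c)$ as well, the map $(\valpha,\beta)\mapsto(\beta,\gamma)$ is a bijection onto pairs of units. Hence the left side of \eqref{2eq: Fourier of V} equals
\begin{align*}
\sumx_{\beta\, (\mathrm{mod}\, c)}\ \sumx_{\gamma\, (\mathrm{mod}\, c)} e\Big(\frac{\widebar{\beta} m+\widebar{\gamma} n+(\beta+\gamma)q}{c}\Big).
\end{align*}
This factors as the product of $\sum^\star_\beta e((\widebar\beta m+\beta q)/c)$ and $\sum^\star_\gamma e((\widebar\gamma n+\gamma q)/c)$. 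Since the Kloosterman sum is symmetric in its two arguments, the product is $S(m,q;c)S(n,q;c)$.

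\textbf{Identity \eqref{2eq: S = V}: reduction to a curve.} Following Iwaniec--Li, I would first write
\begin{align*}
S(m,n;c)\,e\Big(\frac{m+n}{c}\Big)=\sumx_{\valpha\, (\mathrm{mod}\, c)} e\Big(\frac{(\valpha+1)m+(\widebar\valpha+1)n}{c}\Big).
\end{align*}
Put $u=\valpha+1$ and $v=\widebar\valpha+1$. Then $(u-1)(v-1)\equiv 1$, which is the same as $uv\equiv u+v \ (\mathrm{mod}\, c)$. Conversely, any pair $(u,v)$ modulo $c$ with $uv\equiv u+v$ satisfies $(u-1)(v-1)\equiv1$, so $u-1$ is a unit with inverse $v-1$. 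So the sum is exactly $\sum e((mu+nv)/c)$ over solutions $(u,v)$ modulo $c$ of $uv\equiv u+v$.

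\textbf{Identity \eqref{2eq: S = V}: stratification.} I would split the solutions according to $q=(u,c)$. The relation $v\equiv u(v-1)$ with $v-1$ a unit gives $(v,c)=(u,c)=q$. Write $c=qr$, $u=qx$, $v=qy$; then $x,y$ are determined modulo $r$, and $(x,r)=(y,r)=1$ because $(u/q,c/q)=1$. Dividing the congruence $q^2xy\equiv q(x+y) \ (\mathrm{mod}\, qr)$ by $q$ gives $qxy\equiv x+y \ (\mathrm{mod}\, r)$, i.e.\ $\widebar x+\widebar y\equiv q \ (\mathrm{mod}\, r)$. Conversely, each such unit pair $(x,y)$ modulo $r$ gives a solution $(u,v)=(qx,qy)$ with $(u,c)=q$. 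Setting $\valpha'=\widebar x$, we get $\widebar{\valpha'}=x$ and $\overline{q-\valpha'}=y$, with $\valpha'$ and $q-\valpha'$ both units modulo $r$. The phase is $e((mu+nv)/c)=e((mx+ny)/r)$, so the stratum $(u,c)=q$ contributes exactly $V_q(m,n;r)$. Summing over $qr=c$ gives \eqref{2eq: S = V}.

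\textbf{Expected main obstacle.} The only delicate point is the bookkeeping in the stratification step. One must check that $(u,c)=(v,c)$, that $x$ and $y$ are units modulo $r$ (using $(u/q,c/q)=1$), and that $(u,v)\leftrightarrow(q,\valpha')$ is a genuine bijection. In particular $q$ is recovered from $u$ as $(u,c)$, and $x$ is well defined modulo $r$ precisely because $u$ is determined modulo $c=qr$.
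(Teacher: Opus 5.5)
Your proof is correct and follows the paper's argument. For the first identity you, like the paper, stratify the Kloosterman sum by the gcd of $1\pm\alpha$ with $c$; the paper uses $1-\alpha$ after invoking the reality of $S(m,n;c)$, whereas your $\alpha+1$ comes straight from the definition, a cosmetic difference that spares that step, and your explicit checks of $(v,c)=(u,c)$ and of the bijection with the summation range of $V_q(m,n;r)$ fill in what the paper leaves implicit. For the second identity you use the same substitution $\gamma=\alpha-\beta$ as the paper, factoring the double sum into $S(m,q;c)S(n,q;c)$.
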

\begin{proof}
	The first identity is due to Luo \cite[\S 3]{Luo-LS}.  By \eqref{1eq: defn Kloosterman} we write 
	\begin{align*}
		S (m, n; c)  e\Big( \frac {m+n} {c} \Big) = \sumx_{   \valpha      (\mathrm{mod} \, c) } e \bigg(   \frac {  (1-\valpha    ) m +  (1- \widebar{\valpha    }) n } {c} \bigg),
	\end{align*}
	and split the sum according to  $(1-\valpha    , c) = q$. Thus $c = q r$ and $\valpha     = 1 - \widebar{\beta} q$, where $\beta$ ranges over residue classes modulo $r$  such that $  (\beta (q-\beta), r) = 1$. We obtain
	\begin{align*}
		S (m, n; c)  e\Big(\frac {m+n} {c} \Big) =   \sum_{qr=c} \mathop{\sum_{\beta (\mathrm{mod}\, r)}}_{ (\beta (q-\beta), r) = 1 }  e \bigg(   \frac {  \widebar{\beta} m +  \overline{q - \beta} n } {r} \bigg) , 
	\end{align*}
	as desired. 
	
	The second identity follows by a simple calculation:  By  \eqref{2eq: defn V}, its left-hand side equals the double exponential sum 
	\begin{align*}
		\mathop{\mathop{\sum \sum}_{\valpha, \beta (\mathrm{mod}\, c)}}_{ (\beta (\valpha-\beta), c) = (1) }   e \bigg(   \frac {  \widebar{\beta} m +  \overline{\valpha - \beta} n + \valpha q} {c} \bigg)  = {\mathop{\sumx \sumx}_{ \beta, \gamma (\mathrm{mod}\, c)}}  e \bigg(   \frac {  \widebar{\beta} m +  \overline{\gamma} n + (\beta + \gamma) q} {c} \bigg) .
	\end{align*} 
\end{proof}

\subsection{Bilinear Forms with Kloosterman Sums}

For a sequence  $\mathcal{A} = \{ a_{n} \}$, define 
\begin{align*}
	\| \mathcal{A}_{N, \varDelta} \| = \bigg(  \sum_{  {N} < n \leqslant    {N} +  {\varDelta} } |a_{n}|^2 \bigg)^{1/2}, \qquad  
	\| \mathcal{A}_{N} \| = \bigg(  \sum_{ n \sim  N  } |a_{n}|^2 \bigg)^{1/2} .
\end{align*}
\delete{It follows from the Weil bound 
\begin{align*}
	|S (m, n; c)|   \leqslant \tau (c) \sqrt{ (m, n, c) } \sqrt{c} 
\end{align*}  that 
\begin{align}
	\label{2eq: quad form, Kloosterman, x1}  	  \mathop{\mathop{\sum \sum}_{  M < m \leqslant  M + \varDelta }  } _{  N < n \leqslant  N + \varDelta } \!  \big| a_{m}  \overline{b }_{n}   S (m, n; c) \big|   \Lt \tau^2 (c) \sqrt{c} \varDelta \| \mathcal{A}_{M, \varDelta} \| \| \mathcal{B}_{N, \varDelta} \|, 
\end{align} 
and hence 
\begin{align}
	\label{2eq: quad form, Kloosterman, x}  	   \mathop{\sum \sum}_{  m, n  \sim  N  } \big| a_{m}  \overline{a }_{n}   S (m, n; c) \big|   \Lt \tau^2 (c) \sqrt{c} N \| \mathcal{A}_{N}  \|^2, 
\end{align} 
for any complex $a_n$, where  $\tau (c)$ is the number of divisors of $c$. }
By the mean value theorem 
\begin{align*} 
	\sum_{   \valpha      (\mathrm{mod} \, c) }\bigg|\sum_{ N < n\leqslant  N + \varDelta }a_n e\Big(\frac{\valpha n}{c}\Big)\bigg|^2\Lt (c+ \varDelta)\sum_{ N < n\leqslant  N + \varDelta  }|a_n|^2 ,
\end{align*}
along with the Cauchy inequality, we deduce that
\begin{align}\label{2eq: quad form, Kloosterman, 1}
	\mathop{\mathop{\sum \sum}_{  M < m \leqslant  M + \varDelta }  } _{  N < n \leqslant  N + \varDelta } \!  a_{m}  \overline{ a }_{n}   S (m, n; c)    \Lt (c+  {\varDelta}) \| \mathcal{A}_{M, \varDelta} \| \| \mathcal{A}_{N, \varDelta} \|,
\end{align}
and in particular
\begin{align}\label{2eq: quad form, Kloosterman}
	  \mathop{\sum \sum}_{   m, n  \sim  N } a_{m}  \overline{ a }_{n}   S (m, n; c)    \Lt (c+N) \| \mathcal{A}_{N } \|^2 .
\end{align}
Let us also record here a useful inequality for Ramanujan sums:
\begin{equation}\label{eq: quad form,  Ramanujan}
	\sum_{c\leqslant C}\bigg( \sum_{ n \sim  N} | a_nS(n,0;c) | \bigg)^2\Lt CN \log C \sum_{ n \sim  N} | \tau(n) a_n |^2 ,
\end{equation}
which is a direct consequence of $|S(n, 0; c)| \leqslant (n, c)$ and  the Cauchy inequality. 

\subsection{Kuznetsov Trace Formula for {\protect\scalebox{1.06}{$\SL_2 (\BZ)$}}} 

Let  $h (t)$ be an even function satisfying the conditions{\hspace{0.5pt}\rm:}
\begin{enumerate} 
	\item[{\rm (i)\,}] $h (t)$ is holomorphic in  $|\operatorname{Im}(t)|\leqslant {1}/{2}+\vepsilon$,
	\item[{\rm (ii)}] $h(t)\Lt (|t|+1)^{-2-\vepsilon}$ in the above strip. 
\end{enumerate}
Then for $m, n \geqslant   1$ 	we have the following  identity (\cite[Theorem 1]{Kuznetsov}): 
\begin{equation}\label{2eq: Kuznetsov}
	\begin{split}
		\sum_{j = 1}^{\infty}  \omega_j h(t_j)   \lambda_j(m)   \lambda_j(n)  + \frac{1}{\pi} & \int_{-\infty}^{\infty} \omega(t) h(t) (n/m)^{i t} \sigma_{2it}(m)\sigma_{-2it}(n) \nd t\\
		&= \delta_{m, n} \cdot H +   \sum_{c= 1}^{\infty} \frac{S(m, n;c)}{c} H\bigg(\frac{4\pi\sqrt{m n}}{c}\bigg),
	\end{split}
\end{equation}
where  $\delta_{m, n}$ is the Kronecker $\delta$-symbol,  $S (m, n; c)$ is the Kloosterman sum, and 
\begin{align}
	\sigma_{\vnu} (n) = \sum_{d | n} d^{\hspace{0.5pt} \vnu},  
\end{align}
\begin{equation}\label{2eq: omega}
	\omega_j=\frac{ |\rho_j(1)|^2}{\cosh(\pi t_j)}, \qquad \omega (t) = \frac {1} {|\zeta(1+2it)|^2},
\end{equation}
\begin{align}\label{2eq: integrals} 
	H =\frac{1}{\pi^2}\displaystyle\int_{-\infty}^{\infty}h(t)\tanh(\pi t)t \nd t , \qquad  
	H (x)= \frac {2i} {\pi}   \int_{-\infty}^{\infty} J_{2it} (x) h (t) \frac {t \nd t} {\cosh (\pi t )}. 
\end{align}  

\subsection{Hybrid Large Sieve of Young}  
The following hybrid large sieve inequality is a special case of Young's Lemma 6.1 in \cite{Young-GL(3)-Special-Points}, slightly modified in \cite[Lemma 2.6]{Qi-GL(3)-Special-Points}; we shall only need the case $\gamma = 1$ in our later applications.

\begin{lem}\label{lem: Young's LS}
Let $\gamma \neq 0$,  $\tau, v   > 0$,  and $C, N \Gt 1$.  	We have 
	\begin{align}\label{2eq: hybrid ls, Young, 2}
		\begin{aligned}
			\int_{-\tau}^{\tau} \hspace{-1pt} \sum_{c \shskip \leqslant C}  \frac 1 {c} \, \sumx_{   \valpha      (\mathrm{mod} \, c) } \hspace{-1pt} \bigg| \hspace{-1pt}    \sum_{ n  \sim N}    a_{n}   e \Big(   \frac {\valpha     n} {c} \Big)  e  \bigg(   \frac { n^{\gamma} t} {c v} \bigg) \hspace{-1pt} \bigg|^2 \hspace{-1pt}  \nd t \Lt_{\gamma} \big(\tau C + v N^{1-\gamma} \log C \big) \hspace{-2pt}   \sum_{ n \sim N }    |a_n|^2,
		\end{aligned}
	\end{align}
	for any complex $a_n$.  
\end{lem}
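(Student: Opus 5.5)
The plan is to prove \eqref{2eq: hybrid ls, Young, 2} one dyadic block of moduli at a time. We combine Gallagher's lemma in the $t$-variable with the classical additive large sieve over Farey fractions $\valpha/c$ with $c \sim C_1$. The $\log C$ comes from summing over the $O(\log C)$ dyadic blocks $C_1 \leqslant C$.

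\emph{Step 1: dyadic block and Gallagher's lemma.} Fix $C_1 \leqslant C$ and consider $c \sim C_1$. For fixed $c$ and $\valpha$, put $b_n = a_n e(\valpha n/c)$. Gallagher's lemma, applied with a window $\delta \geqslant 1/\tau$, gives
\begin{align*}
\int_{-\tau}^{\tau} \bigg| \sum_{n\sim N} b_n e\Big(\frac{n^\gamma t}{cv}\Big)\bigg|^2 \nd t \Lt \tau^2 \int_{-\infty}^{\infty} \bigg| \sum_{|n^\gamma/(cv) - x| < 1/\tau} b_n \bigg|^2 \nd x .
\end{align*}
Substitute $y = cvx$. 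The right side becomes $(\tau^2/(cv)) \int |\sum_{|n^\gamma - y| < cv/\tau} b_n|^2 \nd y$. Gallagher's lemma allows any window of length at least $1/\tau$, so we may enlarge the $y$-window uniformly to $|n^\gamma - y| < 2C_1 v/\tau$ for all $c \sim C_1$. This makes the inner condition independent of $c$. For $n \sim N$, the $n$ with $|n^\gamma - y| < 2C_1 v/\tau$ lie in an interval of length
\begin{align*}
\varDelta \Lt_\gamma C_1 v N^{1-\gamma}/\tau ,
\end{align*}
by the mean value theorem for $n^\gamma$.

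\emph{Step 2: interchange and apply the large sieve.} Move the $y$-integral outside the sum over $c \sim C_1$ and $\valpha$ (mod $c$)$^\star$. For each $y$, apply the Farey-fraction large sieve
\begin{align*}
\sum_{c\sim C_1}\, \sumx_{\valpha (\mathrm{mod}\, c)} \bigg|\sum_{n\in I} a_n e\Big(\frac{\valpha n}{c}\Big)\bigg|^2 \Lt (C_1^2 + \varDelta) \sum_{n \in I}|a_n|^2 ,
\end{align*}
where $I$ is an interval of length $\varDelta$. Here it is essential to use the sieve jointly over all $c \sim C_1$, not the per-modulus bound $(c+\varDelta)$. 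Using $1/c \asymp 1/C_1$ and integrating over $y$, each $n$ is counted over a $y$-set of measure $\Lt C_1 v/\tau$. The block therefore contributes
\begin{align*}
\frac{\tau^2}{C_1^2 v} \cdot \frac{C_1 v}{\tau} \,(C_1^2+\varDelta)\, \|\mathcal{A}\|^2 = \frac{\tau}{C_1}(C_1^2+\varDelta)\|\mathcal{A}\|^2 \Lt_\gamma \big(\tau C_1 + v N^{1-\gamma}\big)\|\mathcal{A}\|^2 .
\end{align*}

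\emph{Step 3: sum over dyadic blocks.} Summing over $C_1 = C/2^k$ gives $\tau C$ from the geometric series of the first term. The second term contributes $v N^{1-\gamma}\log C$, which is the claim.

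The main obstacle is avoiding a loss of a factor $C$ in the second term. A naive approach expands the square with a smooth majorant in $t$, or applies Gallagher together with the per-modulus mean value theorem (the bound $c+\varDelta$ quoted before \eqref{2eq: quad form, Kloosterman, 1}). Either route yields $C v N^{1-\gamma}$ instead of $vN^{1-\gamma}\log C$. The point is to make the Gallagher window uniform over a whole dyadic range of $c$ (Step 1), so that the full large sieve $C_1^2+\varDelta$ is available across all moduli at once (Step 2). The care needed is in checking that enlarging the window is legitimate, which it is since Gallagher's lemma holds for any $\delta\geqslant 1/\tau$, and in checking the interval-length estimate for $n^\gamma$ uniformly in $\gamma\neq 0$ for $n\sim N$.
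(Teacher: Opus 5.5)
Your overall strategy is the standard route for hybrid inequalities of this type: Gallagher's lemma in $t$, a window in $y=cvx$ made independent of $c$ on each dyadic block $c\sim C_1$, and then the Farey large sieve $(C_1^2+\varDelta)$ applied jointly over that block. The paper gives no proof of its own; it only quotes Young's Lemma 6.1 as modified in Qi's Lemma 2.6. Your bookkeeping in Steps 2 and 3 is correct.

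However, the step you flag as needing care is justified backwards, and as written it is false. Gallagher's lemma requires the window to be \emph{shorter} than $1/\tau$. For $S(t)=\sum_\nu c(\nu)e(\nu t)$ and $\delta=\beta/\tau$ with $0<\beta<1$,
\begin{align*}
\int_{-\tau}^{\tau}|S(t)|^2\,\nd t\leqslant\Big(\frac{\pi\beta}{\sin\pi\beta}\Big)^2\delta^{-2}\int_{-\infty}^{\infty}\Big|\sum_{x<\nu\leqslant x+\delta}c(\nu)\Big|^2\nd x .
\end{align*}
This rests on the Plancherel identity $\int|\sum_{x<\nu\leqslant x+\delta}c(\nu)|^2\,\nd x=\int_{\BR}|S(t)|^2(\sin\pi\delta t/\pi t)^2\,\nd t$, together with a lower bound for that weight on $[-\tau,\tau]$. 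Your first display already uses a window of total length $2/\tau$. You then enlarge it, to total length up to $4/\tau$ in $x$, while keeping the prefactor $\tau^2$. Once $\delta\tau\geqslant1$ the weight vanishes at $t=1/\delta\in(0,\tau]$. Choosing coefficients so that $|S|^2$ is concentrated near that point shows that no bound $\int_{-\tau}^{\tau}|S|^2\Lt\tau^2\int|\sum_{\text{window}}c(\nu)|^2$ can hold uniformly. Enlarging a window is not monotone, because the coefficients can cancel inside it.

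The repair is to make the window uniform by taking the \emph{smallest} one over the block rather than the largest.
\begin{itemize}
\item Put $D=C_1v/(2\tau)$ and use the windows $y<n^\gamma\leqslant y+D$.
\item For every $c\in(C_1,2C_1]$ the corresponding $x$-window $D/(cv)$ lies in $[1/(4\tau),1/(2\tau))$. So Gallagher applies with $\beta\leqslant1/2$.
\item After the substitution $y=cvx$ this gives the bound $\Lt(\tau^2/(cv))\int|\sum_{y<n^\gamma\leqslant y+D}b_n|^2\,\nd y$, with a window independent of $c$, exactly as you wanted.
\end{itemize}
Everything downstream then goes through unchanged. The $n$-intervals have length $\varDelta\Lt_\gamma C_1vN^{1-\gamma}/\tau$, and each $n$ is counted on a $y$-set of measure $D$. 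The block therefore contributes $(\tau/C_1)(C_1^2+\varDelta)\|\mathcal{A}\|^2\Lt_\gamma(\tau C_1+vN^{1-\gamma})\|\mathcal{A}\|^2$, and your Step 3 then gives the lemma.
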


\delete{\subsection{Stationary Phase} 
Finally, we record here  \cite[Lemma A.1]{AHLQ-Bessel}, a slightly improved version of  \cite[Lemma {\rm 8.1}]{BKY-Mass}.   
\begin{lem}\label{lem: stationary phase}
	Let $\varww   \in C_c^{\infty} (a, b)$. Let  $f  \in C^{\infty} [a, b]$ be real-valued.  Suppose that there
	are   parameters $P, Q, R, S, Z  > 0$ such that
	\begin{align*}
		f^{(i)} (x) \Lt_{ \, i } Z / Q^{i}, \qquad \varww^{(j)} (x) \Lt_{ \, j } S / P^{j},
	\end{align*}
	for  $i \geqslant 2$ and $j \geqslant 0$, and
	\begin{align*}
		| f' (x) | \Gt R. 
	\end{align*}
	Then 
	\begin{align*}
		\int_a^b  e (f(x)) \varww (x)  \nd x \Lt_{ A} (b - a) S \bigg( \frac {Z} {R^2Q^2} + \frac 1 {R Q} + \frac 1 {R P} \bigg)^A  
	\end{align*} 
	for any  $A \geqslant 0$.
\end{lem}}

\section{Analysis of the Bessel Integral}
 
 Subsequently, we shall always assume $T^{\vepsilon} \leqslant M \leqslant T^{1-\vepsilon}$.  Let us write
 \begin{align}\label{3eq: defn h(t)}
 	h (t) = \upbeta \bigg(     \frac {t - T } {M }    \bigg) \hspace{-1pt} + \upbeta \bigg(     \frac {t + T } {M }    \bigg), \qquad  \upbeta (r) = \exp \big( \!  - r^2\big),
 \end{align}
 and   define
 \begin{equation}\label{3eq: defn h(t; y)}
 	h( t; y ) =  h (t)  	\cos  ( 2  t \log y  ).  
 \end{equation}
 Note that  $h( t; y )$ is even in $t$ as required by Kuznetsov.   
 The purpose of this section is to study its Bessel integral
 \begin{align}
 	H (x; y) = \frac{2 i } {\pi} \int_{-\infty}^{\infty} J_{2it} (x) h (t; y) \frac {t \nd t} {\cosh \pi t }. 
 \end{align}
First,  we record here the bound and  asymptotic formula for $H(x;y)$ in \cite[\S 3]{Qi-GL(3)-Special-Points}. However, instead of the variables introduced therein:
\begin{align*}
	v = \frac {xy} {4}, \qquad w = \frac {x/y} {4}, 
\end{align*}
it will be more convenient in this paper  to set
\begin{equation}
	\label{3eq: v, w} 
	v = \frac {xy+x/y} {4}, \qquad w = \frac {xy-x/y} {4}. 
\end{equation}
\begin{lem}\label{lem: H for small x}
	For  $v  \Lt 1$, we have $H(x;y)=O_{A}(M v /T^{2A})$ for any $A \geqslant 0$. 
\end{lem}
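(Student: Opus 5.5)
We prove the bound by shifting the $t$-contour in the Bessel integral upward until $\mathrm{Re}(2it)$ becomes a large negative number. On the shifted line the power series of $J_{2it}(x)$ gives a factor $(x/2)^{2\,\mathrm{Re}(it)}$, which is tiny when $x$ is small, while the Gaussian weight $h$ supplies the powers of $T$.

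Write $\cos(2t\log y) = \tfrac12(y^{2it}+y^{-2it})$ and use that $h(t;y)\,t/\cosh(\pi t)$ is even. This lets us rewrite $H(x;y)$ as a sum of integrals of the form
\begin{align*}
\frac{i}{\pi}\int_{-\infty}^{\infty} \bigl(J_{2it}(x) - J_{-2it}(x)\bigr)\, h(t)\, y^{\pm 2it}\, \frac{t\,\nd t}{\cosh \pi t}.
\end{align*}
Equivalently, we keep one Bessel function $J_{2it}$ and shift in the direction where $J_{2it}$ decays. From the series
\begin{align*}
J_{\vnu}(x)=\sum_{k\geqslant 0}\frac{(-1)^k (x/2)^{\vnu+2k}}{k!\,\Gamma(\vnu+k+1)},
\end{align*}
we see that the product $J_{2it}(x)\,y^{2it}$ behaves like $(xy/2)^{2it}/\Gamma(1+2it)$, and $J_{2it}(x)\,y^{-2it}$ behaves like $(x/(2y))^{2it}/\Gamma(1+2it)$. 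Both $xy$ and $x/y$ are at most $4v$, since $y>0$ and $4v = xy + x/y$. So each piece is controlled by $v^{2\,\mathrm{Re}(it)}$.

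We move the contour to $\mathrm{Im}\, t = -A$, so that $\mathrm{Re}(2it) = 2A$. The poles of $1/\cosh(\pi t)$ at $t = -i(k+\tfrac12)$ must be handled. The cleanest way is to first use the standard combination $\bigl(J_{2it}(x)-J_{-2it}(x)\bigr)/\sinh(\pi t)$, which is entire in $t$; the paper's weight $h$ is entire, so no poles from $h$ arise. The Gaussian $h(t)$ stays of size $\exp(O(A^2/M^2))$ times $\exp(-(\mathrm{Re}\,t\mp T)^2/M^2)$ on the shifted line. Alternatively, we can pick up the residues at $t=-i(k+\tfrac12)$. Each residue has size $h(-i(k+\tfrac12))\Lt \exp(-T^2/M^2)$ times $v^{2k+1}$, which is negligible because $M\leqslant T^{1-\vepsilon}$.

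On the line $\mathrm{Re}(2it)=2A$, Stirling gives $|\Gamma(1+2it)|^{-1}\asymp |t|^{-2A-1/2}e^{\pi|\mathrm{Re}\, t|}$. This exponential growth is cancelled by $1/\cosh(\pi t)$. The remaining terms in the series are dominated using $v\Lt 1$ and $|t|\asymp T$. The integrand is therefore $\Lt v^{2A}\,T\cdot T^{-2A-1/2}$ times the Gaussian. Integrating over $t$ gives a factor $M$, so the total is $\Lt M v^{2A} T^{1/2-2A}$. Using $v^{2A}\leqslant v$ when $v\Lt1$ and $A\geqslant 1/2$, and renaming $A$, this yields $O_A(Mv/T^{2A})$.

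The main obstacle is uniform control of the Bessel function and the weight along the shifted contour. The key point is that the ratio $(x/2)^{2it}/\Gamma(1+2it)$ is bounded uniformly in the real part of $t$, by combining Stirling with the $\cosh$ denominator. This is exactly what is done in \cite[\S 3]{Qi-GL(3)-Special-Points}, to which the statement reduces once one notes that $\max(xy, x/y)\leqslant 4v$ in the new variables.
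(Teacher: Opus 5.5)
Your argument is correct and is essentially the standard contour-shift proof behind the bound that the paper imports from \cite[\S 3]{Qi-GL(3)-Special-Points}. You keep $J_{2it}(x)y^{\pm 2it}$, move to $\mathrm{Im}\, t=-A$, let Stirling cancel against $\cosh(\pi t)$, note that the crossed residues carry $h(-i(k+\tfrac12))\Lt_A \exp(-T^2/M^2)$, and use $\max\{xy, x/y\}\leqslant 4v$ to pass to the new variables.

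However, only your residue option is valid, and your ``cleanest way'' should be dropped. The weight carries $1/\cosh(\pi t)$, and $h(t;y)\,t/\cosh(\pi t)$ is odd, not even. So $(J_{2it}-J_{-2it})/\cosh(\pi t)$ still has poles at $t=\pm i(k+\tfrac12)$, where $J_{2k+1}-J_{-2k-1}=2J_{2k+1}\neq 0$. Moreover, the $J_{-2it}$ part would grow like $(x/2)^{-2A}$ under the downward shift. Also take $A$ not a half-integer, so that the shifted line avoids these poles.
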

\begin{lem}\label{lem:H-I}
	For $x \Gt 1$, we have the expression
	\begin{align}\label{3eq: H = I}
		\begin{aligned}
			H ( x; y) = 	 \mathrm{Re} \big\{  \exp (2i v) I (v, w)  \big\}  
			+ O_{A}   (T^{-A}   )  ,  
		\end{aligned}
	\end{align}
	for any $A \geqslant 0$, with
	\begin{align}\label{3eq: I}
		I(v, w) =	M T \int_{-M^{\vepsilon}/ M}^{M^{\vepsilon}/M} g (r)  \exp (2 i   (v (\cosh r - 1) + w \sinh r) )  \nd r,
	\end{align}
	in which 
	\begin{align}\label{3eq: defn of beta0 (r)}
		g (r) =	\frac {2 } {{\pi \sqrt{\pi}}} \big(2 \upbeta (Mr) \cos (2Tr) + M/T \cdot \upbeta' (Mr) \sin (2Tr) \big) . 
	\end{align} 
\end{lem}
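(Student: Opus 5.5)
The plan is to convert $H(x;y)$ into an oscillatory integral over $r$ using an integral representation of $J_{2it}$. Since $t/\cosh(\pi t)$ is odd and $h(t;y)$ is even, I first symmetrize:
\begin{align*}
H(x;y)=\frac{i}{\pi}\int_{-\infty}^{\infty}\big(J_{2it}(x)-J_{-2it}(x)\big)h(t;y)\frac{t\,\nd t}{\cosh \pi t}.
\end{align*}
Then I insert the classical formula (valid for $x>0$)
\begin{align*}
J_{2it}(x)-J_{-2it}(x)=\frac{2\sinh(\pi t)}{\pi i}\int_{-\infty}^{\infty}\cos(x\cosh r)\,e^{2itr}\,\nd r .
\end{align*}
This yields
\begin{align*}
H(x;y)=\frac{2}{\pi^2}\int_{-\infty}^{\infty}\cos(x\cosh r)\int_{-\infty}^{\infty}t\tanh(\pi t)\,h(t;y)\,e^{2itr}\,\nd t\,\nd r .
\end{align*}
The $r$-integral converges only conditionally. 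To justify interchanging the two integrals, I would insert a convergence factor such as $e^{-\delta\cosh r}$, or equivalently rotate $r$ slightly into the complex plane, and then let $\delta\to0$; the Gaussian decay of $h$ in $t$ makes the $t$-side harmless. On the support of $h$ (where $|t|\asymp T$, up to Gaussian tails) we have $\tanh(\pi t)=\mathrm{sgn}(t)+O(e^{-2\pi T})$, so $t\tanh(\pi t)$ may be replaced by $|t|$ at cost $O(T^{-A})$.

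Next I compute the inner $t$-integral exactly. I write $h(t;y)=h(t)\cos(2t\log y)$ and expand the cosine as $\tfrac12\big(e^{2it\log y}+e^{-2it\log y}\big)$. I also split $h$ into its two Gaussians centred at $\pm T$; by the evenness of $|t|h(t)$ the two pieces pair up. The basic Fourier transform is
\begin{align*}
\int t\,\upbeta\Big(\frac{t-T}{M}\Big)e^{2it\rho}\,\nd t
= M e^{2iT\rho}\Big(T\sqrt{\pi}\,e^{-M^2\rho^2}+\frac{M}{2i}\big(\sqrt{\pi}\,e^{-M^2\rho^2}\big)'(M\rho)\Big),
\end{align*}
and it is real-analytic in $\rho$. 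Combining it with its mirror image at $-T$ produces exactly the combination
\begin{align*}
MT\Big(2\upbeta(M\rho)\cos(2T\rho)+\frac{M}{T}\,\upbeta'(M\rho)\sin(2T\rho)\Big),
\end{align*}
that is, $MT\,g(\rho)$ up to the normalising constant. Here $\rho=r\pm\log y$. The factor $e^{-M^2\rho^2}$ shows that, up to $O(T^{-A})$, only $|\rho|\leqslant M^{\vepsilon}/M$ contributes; on the complementary range I bound the $r$-integral trivially after the truncation.

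In the two resulting pieces I shift the variable, $r\mapsto r\mp\log y$. Writing $y=e^{\ell}$, the identity
\begin{align*}
x\cosh(r-\ell)=x\cosh r\cdot\frac{y+1/y}{2}-x\sinh r\cdot\frac{y-1/y}{2}=2v\cosh r-2w\sinh r
\end{align*}
puts the phases in terms of the variables $v,w$ of \eqref{3eq: v, w}; the other shift gives the same expression with $w\to-w$. Since $g$ is even, the substitution $r\to-r$ turns the second piece into the first. Writing $\cos(\cdot)=\mathrm{Re}\,e^{i(\cdot)}$ and $\cosh r=1+(\cosh r-1)$ then extracts the factor $\exp(2iv)$ and leaves $I(v,w)$ as in \eqref{3eq: I}. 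Keeping the sign of $w$ consistent fixes the convention in \eqref{3eq: I}. The hypothesis $x\Gt1$ is used only to make the error terms (the $\tanh$ replacement, the Gaussian tails, and the regularization) uniformly $O_A(T^{-A})$.

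The main obstacle is the rigorous interchange of integrals with the conditionally convergent Bessel representation, together with the bookkeeping of the four terms ($\pm T$ and $\pm\log y$) and of the signs, so that the constants $2/(\pi\sqrt\pi)$ and the derivative term $(M/T)\upbeta'(Mr)\sin(2Tr)$ come out exactly. For the interchange I would first try a contour shift $r\mapsto r+i\theta$ with small $\theta>0$: this gives the $\cos(x\cosh r)$ integral exponential decay in one direction, while $h$ is entire with Gaussian decay. If that becomes messy, I would fall back on the $e^{-\delta\cosh r}$ regularization combined with dominated convergence.
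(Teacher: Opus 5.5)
The paper gives no proof of this lemma. It quotes the result from \cite[\S 3]{Qi-GL(3)-Special-Points} and only rewrites it in the variables $v,w$ of \eqref{3eq: v, w}. Your derivation is correct and is the standard computation behind that citation: the representation of $J_{2it}-J_{-2it}$ by $\int\cos(x\cosh r)e^{2itr}\,\nd r$, the exact Fourier transform of $t\big(\upbeta((t-T)/M)-\upbeta((t+T)/M)\big)$, and the shift $r\mapsto r\mp\log y$. It yields $H(x;y)=MT\int g(r)\cos(2v\cosh r+2w\sinh r)\,\nd r$ up to a negligible error, with the constant $2/(\pi\sqrt{\pi})$ as stated. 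The sign of $w$ is immaterial because $g$ is even, and your argument does not actually need $x\Gt 1$.

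Two small corrections to the write-up. First, since $\cos(x\cosh r)$ is merely bounded, every discarded error must be negligible in $L^1(\nd r)$; your $\tanh$-replacement and Gaussian-tail estimates do give this. Second, justify the interchange by the $e^{-\delta\cosh r}$ regularization, or by truncating $r$ to $[-R,R]$ and using dominated convergence. A straight contour shift $r\mapsto r+i\theta$ does not work, because one of $e^{\pm i x\cosh(r+i\theta)}$ grows at each end.
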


According to  \eqref{3eq: I} and \eqref{3eq: defn of beta0 (r)}, the integral $ I (v, w) $ may be reformulated as
\begin{align}\label{3eq: split I(v,w)}
	I(v,w) = I_{+} (v, w) + I_{-} (v, w),  
\end{align}
where 
\begin{align}\label{3eq: I+-(v, w)}
	I_{\pm} (v, w) = MT   \int_{-M^{\vepsilon}/ M}^{M^{\vepsilon}/M} g_\pm(r ) \exp(2i \psi(r; v, w)) \exp(2i(w\pm T)r) \nd r, 
\end{align}
\begin{align}\label{3eq: g+-(r)}
	g_\pm(r )=\frac{1}{\pi\sqrt{\pi}} \big( 2 \upbeta(Mr) \mp i{M}/{ T} \cdot \upbeta'(Mr) \big) ,
\end{align} 
\begin{align}\label{3eq: psi(r)}
	\psi(r; v, w)=v(\cosh r-1)+w(\sinh r-r) .
\end{align} 
 
It is proven in \cite[\S 3.3]{Qi-GL(3)-Special-Points} that $ I (v, w)$ is negligibly small if $ v  \Lt T $. The next lemma may be regarded as a refinement of this result.

\begin{lem}\label{lem:I-decay} Let $v > |w| $ and $v \Gt 1$.  Then  \(I_{\pm} (v,w)\) and $I (v, w)$ are negligibly small unless
	\begin{equation}\label{eq:I-effective-support}
		|w|-T 
		\Lt
		T^{\vepsilon}
		\Big(   
		M+\frac{ v }{M}   
		\Big).
	\end{equation}
\end{lem}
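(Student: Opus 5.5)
The plan is to integrate by parts repeatedly in $r$ in the integral defining $I_{\pm}(v,w)$, as written in \eqref{3eq: I+-(v, w)}. We treat the linear factor $\exp(2i(w\pm T)r)$ as the oscillating part and regard
\begin{align*}
MT\, g_\pm(r)\exp(2i\psi(r;v,w))
\end{align*}
as the amplitude. Since $I = I_+ + I_-$ by \eqref{3eq: split I(v,w)}, it suffices to treat each $I_\pm$ separately. Suppose \eqref{eq:I-effective-support} fails, so that $|w| - T \geqslant T^{\vepsilon}(M + v/M)$. Then for both signs
\begin{align*}
|w \pm T| \geqslant |w| - T \Gt T^{\vepsilon}\big(M + v/M\big),
\end{align*}
and the linear phase has derivative bounded below by $\Gt T^{\vepsilon}(M + v/M)$.

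Next we bound the derivatives of the amplitude on $|r| \leqslant M^{\vepsilon}/M$.

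The factor $g_\pm(r)$ is built from $\upbeta(Mr)$ and $\upbeta'(Mr)$, so its $j$-th derivative is $\Lt_j M^j$. The endpoints $\pm M^{\vepsilon}/M$ cause no trouble: there $\upbeta(Mr)$ is $\exp(-M^{2\vepsilon})$, which is negligible. Alternatively, one can first extend the integral to all of $\BR$ at negligible cost, or insert a smooth cutoff.

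For the phase $\psi$ we have
\begin{align*}
\psi'(r) = v\sinh r + w(\cosh r - 1), \qquad |\psi'(r)| \Lt v|r| + |w| r^2 \Lt v|r| \Lt vM^{\vepsilon}/M,
\end{align*}
using $|w| < v$ and $|r| \leqslant M^{\vepsilon}/M \leqslant 1$. Similarly $\psi''(r) \Lt v$, and $\psi^{(k)}(r) \Lt v$ for all $k \geqslant 2$.

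By Faà di Bruno, each derivative of $\exp(2i\psi)$ is a sum of products of derivatives of $\psi$. A factor $\psi'$ costs $vM^{\vepsilon}/M$. A factor $\psi^{(k)}$ with $k \geqslant 2$ costs $v$, but consumes $k$ derivatives. Hence the $j$-th derivative is bounded by
\begin{align*}
\Lt_j \big(M^{\vepsilon}(M + v/M) + \sqrt{v}\,\big)^j .
\end{align*}
Here $\sqrt v$ accounts for the $\psi''$ terms, since $v \leqslant (\sqrt v)^2$.

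The term $\sqrt v$ is harmless because $\sqrt v \leqslant M + v/M$ by AM–GM. So every derivative of the full amplitude costs a factor $\Lt M^{\vepsilon}(M + v/M)$ per differentiation.

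Each integration by parts therefore gains a factor
\begin{align*}
\frac{M^{\vepsilon}(M + v/M)}{|w \pm T|} \Lt T^{-\vepsilon/2},
\end{align*}
taking the $\vepsilon$ in the amplitude bounds smaller than the one in the hypothesis. After $A/\vepsilon$ steps, and using the trivial bound $MT \cdot M^{\vepsilon}/M$ for the remaining integral, we obtain $O(T^{-A})$.

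Whether $v$ is polynomially bounded in $T$ is not needed. The gain $(M + v/M)/|w \pm T|$ is uniform, and each step gives at least $T^{-\vepsilon/2}$ regardless of the size of $v$.

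The main obstacle is controlling the higher derivatives of $\exp(2i\psi)$ uniformly. In particular one must check that the $\psi''$ contributions of size $v$ are dominated, via $\sqrt v \leqslant M + v/M$ and the fact that each such factor absorbs two derivatives. This is what produces the $v/M$ and $M$ shape of \eqref{eq:I-effective-support}.
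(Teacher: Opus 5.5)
Your proposal is correct and is essentially the paper's proof. You bound the derivatives of the amplitude $g_\pm(r)\exp(2i\psi(r;v,w))$ using $\psi' \Lt M^{\vepsilon}v/M$, $\psi^{(j)} \Lt v$ for $j \geqslant 2$, and Fa\`a di Bruno, absorb $\sqrt{v}$ into $M+v/M$, and integrate by parts against $\exp(2i(w\pm T)r)$ using $|w\pm T| \geqslant |w|-T$; your treatment of the boundary terms at $r=\pm M^{\vepsilon}/M$, which are negligible because of the Gaussian factor, supplies a detail the paper leaves implicit.
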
 

\begin{proof}
On the range \(|r|\leqslant M^\vepsilon/M\), we have
\begin{align*}
	\frac {\partial   \psi(r; v, w)} {\partial r }\Lt \frac{  M^\vepsilon v}{M},
	\qquad
\frac {\partial^{j}  \psi(r; v, w)} {\partial r^j} \Lt_{j} v  , 
	\quad (j\geqslant 2).
\end{align*} 
By the Faà di Bruno formula \cite{Faa-di-Bruno}, we have 
\begin{align*}
	\frac {\partial^{j} \exp(2i \psi(r; v, w))} {\partial r^j} \Lt_j \bigg( \sqrt{v} + \frac {M^{\vepsilon} v} {M}  \bigg)^{j} 
\end{align*}
for any $j \geqslant 0$. 
Hence, if we set $ g_\pm (r; v, w) =  g_\pm(r ) \exp(2i \psi(r; v, w))$,  then it follows from the product rule   that 
\begin{align*}
\frac {\partial^{j} g_\pm  (r; v, w)} {\partial r^j}  \Lt_{j} \bigg(   M + \sqrt{v} + \frac {M^\vepsilon v} {M}   \bigg)^{j} \Lt_j  \bigg(  M +   \frac {M^\vepsilon v} {M}   \bigg)^{j},
\end{align*}
for any $j \geqslant 0$. Thus the Fourier integral in \eqref{3eq: I+-(v, w)} is negligibly small if $|w \pm T | \Gt T^{\vepsilon} (M +v/M) $. 
\end{proof}

\begin{lem}\label{lem:I-main}
	Let $v > |w| $ and $v \Gt 1$. We have 
	\begin{equation}\label{3eq: asymptotic formula of I}
		I(v,w)
		=
		\frac{2 w f (w)}{\pi} 
		+
		O\bigg(\frac{T v }{M^2} \bigg),
	\end{equation}
	where
	\begin{align}\label{3eq: defn of f(w)}
		f (w)=  \upbeta \bigg( \frac{w - T }{M }\bigg) - \upbeta \bigg( \frac{w + T }{M }\bigg), \qquad \upbeta (r) = \exp \big( \!  - r^2\big).
	\end{align}
\end{lem}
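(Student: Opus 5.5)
The plan is to Taylor-expand the phase in the formula \eqref{3eq: I} of Lemma \ref{lem:H-I}, keep only its linear part, and evaluate what remains exactly as a Gaussian Fourier integral. We write
\begin{align*}
\exp\big(2i(v(\cosh r-1)+w\sinh r)\big) = e^{2iwr}\exp\big(2i\psi(r;v,w)\big),
\end{align*}
with $\psi$ as in \eqref{3eq: psi(r)}, and then replace $\exp(2i\psi)$ by $1$. On $|r|\leqslant M^{\vepsilon}/M$ we have $\cosh r-1\Lt r^2$ and $\sinh r - r\Lt |r|^3$. Together with $|w|<v$, this gives $|\psi(r;v,w)|\Lt v r^2$, and hence $|\exp(2i\psi)-1|\Lt v r^2$.

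Since $g(r)\Lt \exp(-M^2r^2)(1+M/T)\Lt \exp(-M^2r^2)$, the error incurred by this replacement is at most
\begin{align*}
MT\int_{-\infty}^{\infty} e^{-M^2r^2}\, v r^2 \,\nd r \Lt MT\cdot v\cdot M^{-3} = \frac{Tv}{M^2},
\end{align*}
which is exactly the error term claimed in \eqref{3eq: asymptotic formula of I}. Next we extend the remaining integral $MT\int g(r)e^{2iwr}\,\nd r$ from $|r|\leqslant M^{\vepsilon}/M$ to all of $\BR$. By the Gaussian decay of $g$, this costs only a negligible amount.

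It remains to compute the main term in closed form, using
\begin{align*}
\int_{-\infty}^{\infty}\upbeta(Mr)e^{2iar}\,\nd r=\frac{\sqrt\pi}{M}\,\upbeta(a/M).
\end{align*}
The two parts of $g$ are handled as follows.
\begin{itemize}
\item For the first part, write $\cos(2Tr)$ as a sum of two exponentials. This gives
\begin{align*}
2\int\upbeta(Mr)\cos(2Tr)e^{2iwr}\,\nd r = \frac{\sqrt\pi}{M}\Big(\upbeta\big(\tfrac{w+T}{M}\big)+\upbeta\big(\tfrac{w-T}{M}\big)\Big).
\end{align*}
\item For the second part, use $\upbeta'(Mr)=M^{-1}\frac{\nd}{\nd r}\upbeta(Mr)$ and integrate by parts. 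Writing $\sin(2Tr)e^{2iwr}=\big(e^{2i(w+T)r}-e^{2i(w-T)r}\big)/2i$, we obtain
\begin{align*}
\frac MT\int\upbeta'(Mr)\sin(2Tr)e^{2iwr}\,\nd r = -\frac{\sqrt\pi}{MT}\Big((w+T)\upbeta\big(\tfrac{w+T}{M}\big)-(w-T)\upbeta\big(\tfrac{w-T}{M}\big)\Big).
\end{align*}
\end{itemize}
Adding the two parts, the terms with coefficient $\pm T$ cancel against the first part. What survives is $\frac{\sqrt\pi}{MT}\,w\,f(w)$, with $f$ as in \eqref{3eq: defn of f(w)}. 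Multiplying by $MT\cdot\frac{2}{\pi\sqrt\pi}$ gives exactly $\frac{2wf(w)}{\pi}$.

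None of these steps should be a real obstacle. The only point needing care is the first one, where the bound $|\psi|\Lt vr^2$ must hold uniformly in $w$. This is precisely where the hypothesis $v>|w|$ is used: it controls the cubic term $w(\sinh r-r)$ by $v r^2$ on the short range $|r|\Lt M^{\vepsilon}/M\leqslant 1$. The hypothesis $v\Gt 1$ is needed only to place us in the regime of Lemma \ref{lem:H-I}.
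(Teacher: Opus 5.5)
Your proof is correct and follows essentially the same route as the paper: replace $\exp(2i\psi)$ by $1$ at cost $O(vr^2)$, which gives the $O(Tv/M^2)$ error, and then evaluate the remaining Gaussian Fourier integrals exactly. The paper first splits $g$ into $g_\pm$ and evaluates each $I_\pm$ separately, which is only a cosmetic difference. One harmless slip: the $\upbeta'(Mr)$ term carries a factor $Mr$, so the clean bound is $g(r)\Lt \exp(-M^2r^2/2)$ rather than $\exp(-M^2r^2)$, and this changes nothing.
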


\begin{proof}
	For  \( r   \) small, in view of \eqref{3eq: psi(r)}, we have 
	\begin{align*}
		\exp (2i \psi (r)) = 1 + O (   r^2 v ). 
	\end{align*}
Consequently, we deduce from \eqref{3eq: I+-(v, w)} that
\begin{align*}
	I_{\pm} (v, w) = MT   \int_{-\infty}^{\infty}  g_\pm(r )   \exp(2i(w\pm T)r) \nd r + O \bigg(\frac{T v }{M^2} \bigg). 
\end{align*}
By the definition of $g_{\pm} (r)$ in \eqref{3eq: g+-(r)},  the integral may be evaluated explicitly so that
\begin{align*}
I_{\pm} (v, w) =  	\mp \frac {2w} {\pi}   \upbeta \bigg(\frac {w\pm T} {M}\bigg) + O \bigg(\frac{T v }{M^2} \bigg),
\end{align*}
as desired. 
\end{proof}

In practice, we have 
\begin{align*}
	x = \frac {4\pi \sqrt{m n}} {c} , \qquad y = \sqrt{\frac {m} {n} },
\end{align*}
so that
\begin{align*}
	v =  \pi \frac { m +n } {c}, \qquad w =  \pi \frac { m - n } {c}. 
\end{align*}
The last lemma will be useful when we need to separate the variables $m$ and $n$.

\begin{lem}\label{lem: Fourier of f(w)}
	We have
	\begin{align}\label{3eq: f (w) integral}
		f (w) = \frac 2 {i \sqrt{\pi}}  M \! \int_{-\infty}^{\infty} \upbeta (M r) \sin (2T r) \exp (2i r w) \nd r . 
	\end{align}
\end{lem}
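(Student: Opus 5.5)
This is a Fourier-transform computation for a Gaussian. We evaluate the right-hand side of \eqref{3eq: f (w) integral} directly by writing $\sin(2Tr) = \big(e^{2iTr} - e^{-2iTr}\big)/(2i)$. The integral then becomes
\begin{align*}
\frac{1}{2i}\int_{-\infty}^{\infty} \upbeta(Mr)\big( e^{2ir(w+T)} - e^{2ir(w-T)}\big)\, \nd r .
\end{align*}
Next we apply the standard Gaussian Fourier transform
\begin{align*}
\int_{-\infty}^{\infty} \exp(-M^2 r^2) \exp(2 i r \xi)\, \nd r = \frac{\sqrt{\pi}}{M}\exp\big(-\xi^2/M^2\big) = \frac{\sqrt{\pi}}{M}\,\upbeta(\xi/M).
\end{align*}
This identity follows by completing the square and shifting the contour, or directly from $\int e^{-a r^2 + b r}\,\nd r = \sqrt{\pi/a}\, e^{b^2/4a}$ with $a = M^2$ and $b = 2i\xi$.

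Taking $\xi = w \pm T$, the integral equals
\begin{align*}
\frac{\sqrt{\pi}}{2iM}\Big(\upbeta\Big(\frac{w+T}{M}\Big) - \upbeta\Big(\frac{w-T}{M}\Big)\Big).
\end{align*}
Multiplying by the prefactor $\frac{2}{i\sqrt{\pi}} M$ gives the constant $\frac{2M}{i\sqrt{\pi}} \cdot \frac{\sqrt{\pi}}{2iM} = \frac{1}{i^2} = -1$. The result is therefore $\upbeta\big(\frac{w-T}{M}\big) - \upbeta\big(\frac{w+T}{M}\big)$, which is exactly $f(w)$ as defined in \eqref{3eq: defn of f(w)}.

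There is no real obstacle here. The only point needing care is the sign bookkeeping, namely the $1/(2i)$ from the sine together with the $1/i$ in the prefactor, to confirm that the overall constant is $-1$ and the order of the two Gaussians matches \eqref{3eq: defn of f(w)}. Absolute convergence of the Gaussian integral justifies splitting the sine and exchanging the steps above.
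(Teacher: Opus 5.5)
Your computation is correct: writing $\sin(2Tr)$ as a difference of exponentials, applying $\int \exp(-M^2r^2)\exp(2ir\xi)\,\nd r = (\sqrt{\pi}/M)\,\upbeta(\xi/M)$, and checking that the overall constant is $-1$ gives exactly $f(w)$. The paper states this lemma without proof, and this direct Gaussian evaluation is the routine verification it leaves implicit; it is the same explicit evaluation used in the proof of Lemma \ref{lem:I-main}.
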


\section{Application of the Kuznetsov Trace Formula}
Choose the spectral weight in \eqref{2eq: Kuznetsov} to be $h (t; \sqrt{m/n})$ as defined in \eqref{3eq: defn h(t)} and \eqref{3eq: defn h(t; y)}. Then multiply both sides of \eqref{2eq: Kuznetsov}  by ${a}_{m}   {a}_{n} $, and  sum over $  m, n \sim  {N} $.  Note that $$\mathrm{Re} \big( (m/n)^{i t} \big) = \cos \big(2t \log \sqrt{m/n} \big), $$
so the Kuznetsov formula \eqref{2eq: Kuznetsov}  yields
\begin{align}\label{4eq: C=D+P}
	S (\mathcal{A}) + T (\mathcal{A})  = D (\mathcal{A}) + P (\mathcal{A}), 
\end{align}
with diagonal 
\begin{align}
	D (\mathcal{A}) = H \cdot \sum_{  n } |a_{n}|^2 , \qquad H   = \frac{1}{\pi^2}\displaystyle\int_{-\infty}^{\infty}h(t) \tanh(\pi t)t \nd t , 
\end{align} 
and off-diagonal 
\begin{align}\label{def: off-diagonal}
	P (\mathcal{A}) =  \sum_{c }  	\mathop{\sum\sum}_{m, n  }   {a}_{m}   {a}_{n} \frac{S (m, n;c)}{c} H \bigg( \frac {4\pi \sqrt{m n}} {c} ; \sqrt{\frac {m} {n} } \bigg).
\end{align}
It is clear that 
\begin{align}\label{4eq: bound for diag}
	D (\mathcal{A})\Lt MT \|\mathcal{A}\|^2.
\end{align}
By Lemmas \ref{lem: H for small x}, \ref{lem:H-I}, and \ref{lem:I-decay},  it follows that the Bessel $H$-integral may be transformed into $I$-integral by \eqref{3eq: H = I} while the $c$-sum may be truncated effectively at $ c \asymp N/ T $, so that, up to a negligible error,  $ P (\mathcal{A}) $ is turned into 
\begin{align}\label{4eq: P(a)} 
	\begin{aligned}
	  \mathrm{Re}   \sum_{c \shskip \Lt N/T  }    \mathop{\sum   \sum}_{m, n  \sim  N }    {a}_{m}   {a}_{n}   \frac {S (m, n;c)} { c } e \Big(     \frac {m    +    n} {c}    \Big)  I \Big(  \pi \frac { m +n } {c},  \pi \frac { m - n } {c} \Big) . 
	\end{aligned}
\end{align}     

\section{Reductions for the Application of Poisson}

First, in view of \eqref{3eq: asymptotic formula of I} and \eqref{4eq: P(a)}, we wish to extract from $P (\mathcal{A}) $ the main term: 
	 \begin{align}\label{def: Q(a)}
	Q (\mathcal{A})=2\mathrm{Re}\sum_{c }  	\mathop{\sum\sum}_{m, n  }   {a}_{m}   {a}_{n}(m-n) \frac{S (m, n;c)} {c^2}  e\bigg(\frac{m+n}{c}\bigg)f\bigg(\pi\frac{m-n}{c}\bigg). 
\end{align}
The next lemma is on the estimate for the error term.  

\begin{lem}\label{lem: error estimate}
We have 
\begin{align}\label{5eq: P(A)=Q(A)+O}
	P (\mathcal{A}) = Q (\mathcal{A}) + O\bigg(  \frac{T N }{M } N^{\vepsilon} \|\mathcal{A}\|^2\bigg). 
\end{align}
\end{lem}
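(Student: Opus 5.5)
The plan is to bound $P(\mathcal{A})-Q(\mathcal{A})$ by writing the error of Lemma \ref{lem:I-main} as an $r$-integral in which $m$ and $n$ separate. Then open the Kloosterman sum and use large sieve inequalities, rather than estimating $S(m,n;c)$ pointwise.

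\emph{Reduction.} By \eqref{4eq: P(a)}, with $v=\pi(m+n)/c$ and $w=\pi(m-n)/c$, we have $v\asymp N/c$ and $v>|w|$. Since $c \Lt N/T$, also $v \Gt T$. Lemma \ref{lem:I-main} gives $2wf(w)/\pi$ as the main term; inserted into \eqref{4eq: P(a)}, this is exactly $Q(\mathcal{A})$ in \eqref{def: Q(a)}. For the remainder we do not use the crude bound $O(Tv/M^2)$. Instead we go back to \eqref{3eq: I+-(v, w)} and \eqref{3eq: g+-(r)}, where the remainder equals
\begin{align*}
	MT\int_{-M^{\vepsilon}/M}^{M^{\vepsilon}/M} g_\pm(r)\big(\exp(2i\psi(r;v,w))-1\big)\exp(2i(w\pm T)r)\,\nd r ,
\end{align*}
plus a negligible tail from completing the integral. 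Since $\psi$, $v$ and $w$ are linear in $m$ and $n$ with coefficient $\pi/c$, the factor $\exp(2i\psi)\exp(2iwr)$ splits as $e(m\phi_1(r)/c)\,e(n\phi_2(r)/c)$. Here $\phi_1(r)=\cosh r-1+\sinh r$ and $\phi_2(r)=\cosh r-1-\sinh r$. The subtracted term $1\cdot\exp(2iwr)$ splits likewise.

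\emph{Large sieve step.} For fixed $c$ and $r$, the $m,n$-sum becomes a bilinear form $\sum\sum b_m b'_n S(m,n;c)e((m+n)/c)$ with twisted coefficients. Opening $S(m,n;c)$ over $\valpha \pmod c$ turns each term into a product of two linear sums of the form $\sum_m a_m e(\valpha m/c)\,e(m\phi_1(r)/c)$. By Cauchy--Schwarz, everything reduces to
\begin{align*}
	\int \sum_{c\Lt N/T}\frac1c\,\sumx_{\valpha(\mathrm{mod}\,c)}\Big|\sum_{m\sim N}a_m e\Big(\frac{\valpha m}{c}\Big)e\Big(\frac{m\phi(r)}{c}\Big)\Big|^2 .
\end{align*}
This has the shape of Lemma \ref{lem: Young's LS} with $\gamma=1$. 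We substitute $t=\phi(r)v_0$, with $v_0$ a scaling parameter to be chosen, so that $|t|\Lt \tau:=v_0M^{\vepsilon}/M$. The Jacobian of this substitution is $\asymp 1/v_0$, while the prefactor is $MT\cdot\sup|g_\pm|$.

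The subtracted term $1$ contributes through the same framework with $\phi\equiv r$. Alternatively, it can be handled as the $r$-integral of the main term, which is already evaluated and cancels against the $Q$ extraction. The choice $v_0=1$ gives $\tau\asymp M^{\vepsilon}/M$. Lemma \ref{lem: Young's LS} then bounds the squared sums by $(C\tau+v_0)\|\mathcal{A}\|^2$ with $C=N/T$. Multiplying by $MT$ and by the $r$-length $M^{\vepsilon}/M$ should land at $O(TN^{1+\vepsilon}\|\mathcal{A}\|^2/M)$.

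\emph{Main obstacle.} The difficulty is keeping the saving that Lemma \ref{lem:I-main} gets from $\exp(2i\psi)-1=O(r^2v)$ while separating the variables. Taking the absolute value inside the integral loses the cancellation in $r$ that made $I$ small. Estimating the bilinear form trivially via \eqref{2eq: quad form, Kloosterman} instead gives about $TN^2/M^2$, which is too weak. I would first try applying Lemma \ref{lem: Young's LS} to the full oscillatory factor $e(m\phi_1(r)/c)$, where the $r$-integration itself supplies the hybrid saving, and check that the factor $v_0$ can be tuned so that $C\tau+v_0\Lt N^{1+\vepsilon}/(MT)\cdot(\text{normalization})$. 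If this falls short, the fallback is to use the localisation $\big||w|-T\big|\Lt T^{\vepsilon}(M+N/(cM))$ from Lemma \ref{lem:I-decay}. We would cut $m$ and $n$ into blocks of length $\varDelta=c\,T^{\vepsilon}(M+N/(cM))$ and apply \eqref{2eq: quad form, Kloosterman, 1} blockwise. This gives $\sum_c c^{-2}(TN/M^2)(c+\varDelta)\|\mathcal{A}\|^2\Lt (TN/M+TN^2/M^3)N^{\vepsilon}\|\mathcal{A}\|^2$. It suffices when $N\Lt M^2$; the range $N>M^2$ would still need the hybrid large sieve.
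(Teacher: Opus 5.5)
There is a genuine gap, and your main route cannot prove the lemma in the range that matters.

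\textbf{The hybrid large sieve route.} You either bound the piece $e(m\phi_1(r)/c)\,e(n\phi_2(r)/c)$ and the subtracted piece $e(mr/c)\,e(-nr/c)$ separately, or let the latter ``cancel against $Q$''. Either way you are estimating $P(\mathcal{A})$ and $Q(\mathcal{A})$ individually. Each of these genuinely has the size of the main term $\frac{2}{\sqrt{\pi}}\varSigma(\mathcal{A})$; for instance $P(\mathcal{A})\geqslant T(\mathcal{A})-D(\mathcal{A})$ by \eqref{4eq: C=D+P}. For $a_n=1$ on $(N,2N]$ this size is $\asymp MN\|\mathcal{A}\|^2$, which exceeds $TN\|\mathcal{A}\|^2/M$ exactly when $M>\sqrt{T}$.

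Your bookkeeping seemed to give something smaller for two reasons. First, you counted the $r$-length twice, since Lemma \ref{lem: Young's LS} already contains the $t$-integral. More seriously, after absorbing $e((m+n)/c)$ the frequencies become $(\valpha+1)/c$, which are not reduced residues. For example, $\valpha\equiv-1$ gives frequency $0$ for every $c$. The same rational point therefore recurs for many $c$, and this recurrence is exactly what produces $\varSigma(\mathcal{A})$. Lemma \ref{lem: Young's LS} sums over reduced classes only, so it does not apply.

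\textbf{The fallback.} Your fallback is essentially the paper's route: localise with Lemma \ref{lem:I-decay}, so that each block $I$ of length $\varDelta=T^{\vepsilon}\max\{Mc,N/M\}$ meets only $O(1)$ relevant blocks $J$, then apply \eqref{2eq: quad form, Kloosterman, 1}. It is missing two things.

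First, \eqref{2eq: quad form, Kloosterman, 1} needs coefficients depending on $m$ alone and on $n$ alone. So you cannot insert the pointwise bound $O(Tv/M^2)$ for the remainder kernel, which depends on $m,n$ jointly. Applying the bound to your two pieces separately at fixed $r$ loses the saving entirely and returns a bound of order $MN\|\mathcal{A}\|^2$. The missing device is the identity, at each fixed $r$,
\[ e\Big(\frac{m+n}{c}(\cosh r-1)+\frac{m-n}{c}(\sinh r-r)\Big)-1=a_+(m/c;r)\big(a_-(n/c;r)-1\big)+\big(a_+(m/c;r)-1\big), \]
with $a_\pm(x;r)=e\big(x(e^{\pm r}\mp r-1)\big)$. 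Each term is then a product of an $m$-sequence and an $n$-sequence. One factor in each is bounded by $\min\{1,xr^2\}\Lt\min\{1,N/(M^2c)\}N^{\vepsilon}$.

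Second, you must keep the $\min$ with $1$, i.e.\ the trivial bound. Dropping it is what produces your extra $TN^2/M^3$ term. With it, for $c\leqslant N/M^2$ and for $c>N/M^2$ alike,
\[ \frac{T}{c}\,(c+\varDelta)\min\Big\{1,\frac{N}{M^2c}\Big\}\Lt\frac{TN}{Mc}. \]
Summing over $c\Lt N/T$ gives the lemma for all $N$, with no hybrid large sieve needed.
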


\begin{proof}
	 Similar to \eqref{4eq: P(a)}, by the exponential decay of $f (w)$ as in \eqref{3eq: defn of f(w)}, we may restrict the $c$-sum in \eqref{def: Q(a)} as well to the range $ c \Lt N/ T $. For $c \Lt N / T$, let  $E(c; \mathcal{A})$ denote the $c$-th term in the difference $ P (\mathcal{A}) - Q (\mathcal{A}) $. More explicitly, in view of \eqref{3eq: split I(v,w)}, \eqref{3eq: I+-(v, w)}, and the proof of Lemma \ref{lem:I-main}, 
	 \begin{align*}
	 	E (c; \mathcal{A}) = E_+ (c; \mathcal{A}) + E_- (c; \mathcal{A}), 
	 \end{align*}
 where 
 \begin{align*}
 	E_{\pm} (c;\mathcal{A}) = \frac 1 c \mathrm{Re} \mathop{\sum   \sum}_{m, n \sim N }    {a}_{m}   {a}_{n}  S (m, n;c) e \Big(     \frac {m+n} {c}    \Big) K_\pm\Big(  \pi \frac { m +n } {c},  \pi \frac { m - n } {c}\Big),
 \end{align*}
and 
\begin{align*}
	K_\pm(v,w)=MT\int_{-M^{\vepsilon}/M}^{M^{\vepsilon}/M}g_\pm(r) (\exp(2i\psi(r;v,w))-1 ) \cdot 
	\exp(2i(w\pm T)r)
	 \,\nd r.
\end{align*}
Next, we explain how to separate the variables $m$ and $n$ in order to apply \eqref{2eq: quad form, Kloosterman, 1}.  Our idea is to split the difference (here $\psi(r;v,w))$ is defined in \eqref{3eq: psi(r)}) \[  \exp \Big(2i\psi \Big(r;\pi \frac { m +n } {c},  \pi \frac { m - n } {c}\Big) \! \Big)-1 = e \Big( \frac {m + n} {c} (\cosh r  - 1)   +   \frac {m - n} {c} (\sinh r - r) \Big) - 1  \]
into the sum 
\[ a_+ (m/c; r) \cdot (a_-(n/c; r) -  1) + (a_+ (m/c; r) - 1),  \]
with $m$ and $n$ separable in both summands,  for 
\[ a_{\pm} (x; r) =  e  (x (\exp (\pm r) \mp r - 1)  ).   \]
Note that 
\begin{align}\label{5eq: a(x;r) - 1}
	a_{\pm} (x; r) - 1 \Lt \min \big\{ 1, x r^2 \big\}. 
\end{align} 
Now let us proceed and adopt some arguments from \cite[\S 3]{Luo-Twisted-LS}. For  $c \Lt N/T$, define
\delete{\begin{equation*} 
\red{	\varDelta=\min\bigg\{
	N,\,
	T^{\vepsilon} \bigg(c M +\frac{N}{M} \bigg)
	\bigg\}, \quad 
\text{$c T^{\vepsilon} M \Lt N$, so the minimum is just the latter!} }
\end{equation*}}
\begin{equation}\label{5eq: Delta}
	\varDelta = T^{\vepsilon} \max \bigg\{  M c, \frac {  N} {M} \bigg\} ,
\end{equation}
and split the summation over $m, n$ into squares $I \times J$, 
where $I, J$ are sub-intervals of $(N,2N]$ of equal length $\varDelta$. 
For every $I$ there are at most six $J$ whose distances to the shifted intervals $ I \pm c T /\pi $ do not exceed $\varDelta / 2$,   and we only need to consider the contribution from these $I \times J$; otherwise we would have $  | |m - n | - c T /\pi | > \varDelta /2 $ and, in view of Lemma \ref{lem:I-decay} and the choice of $\varDelta$,  the integral $K_{\pm} (\pi (m+n)/ c, \pi (m-n)/ c)$ is negligibly small.


It follows from \eqref{2eq: quad form, Kloosterman, 1},  along with \eqref{5eq: a(x;r) - 1} and \eqref{5eq: Delta},  that
\begin{align*}
	E (c; \mathcal{A}) & \Lt \frac {  T} { c }    (c+ \varDelta) \min \bigg\{ 1, \frac {N} {M^2 c}  \bigg\} N^{\vepsilon}  \|\mathcal{A}\|^2 \\
	& \Lt \frac {  T} { c }  \max \bigg\{  M c, \frac { N} {M} \bigg\}  \min \bigg\{ 1, \frac {N} {M^2 c}  \bigg\} N^{\vepsilon}  \|\mathcal{A}\|^2\\
	& =  \frac {   T N } { M c }    N^{\vepsilon}  \|\mathcal{A}\|^2 . 
\end{align*}  
Note that the sum of $ \|\mathcal{A}_{N_I, \varDelta } \|  \cdot        \|\mathcal{A}_{N_J, \varDelta } \|  $ (say $I = (N_I, N_I +\varDelta  ]$) for those $I \times J$ under concern is bounded by $ 6 \| \mathcal{A}_{N} \|^2 $. Finally, by summing  the estimate above for $ E (c; \mathcal{A}) $ over $c $, we obtain the error bound in \eqref{5eq: P(A)=Q(A)+O}.  
\end{proof}

\begin{remark}\label{rem: Taylor}
	 The results in an earlier draft of the paper is weaker, as we followed Luo \cite[\S 3]{Luo-Twisted-LS} and used   Taylor expansions to keep $m$ and $n$ separate: 
	 \begin{align*}
	 	e \Big(      \frac {m + n} {c} (\cosh r  - 1)  \Big)=   1 +  \sum_{k=1}^{\infty}  \frac {  (2\pi i)^k } {k!} \Big( \frac {m + n} {c} (\cosh r  - 1) \Big)^k , \\
	 	e \Big(      \frac {m - n} {c} (\sinh r - r)  \Big)=   1 +  \sum_{k=1}^{\infty}  \frac {  (2\pi i)^k } {k!} \Big( \frac {m - n} {c} (\sinh r - r) \Big)^k. 
	 \end{align*}
	 Since $ \cosh r  - 1  = O (r^2)$ and $\sinh r - r = O (r^3)$, 
	 this approximation argument  (only) works effectively  when 
	 \begin{align*}
	 	c > \frac {M^{\vepsilon} N  }   {M^{2} } , 
	 \end{align*} so that the Taylor series are rapidly convergent. 
	 However, in this way, we have to apply the Weil bound in the trivial manner if $c \leqslant M^{\vepsilon} N / M^2 $, yielding the weaker estimate:  \begin{align*}
	 	E (c; \mathcal{A}) \Lt   \frac {T N } {M \sqrt{c}}  N^{\vepsilon}  \|\mathcal{A}\|^2.
	 \end{align*}  
\end{remark}

By applying the Luo  identity \eqref{2eq: S = V} in Lemma  \ref{lem: S = V}, we may rewrite \eqref{def: Q(a)} as 
\begin{align}\label{def: Q(a) open S to V}
	Q (\mathcal{A})=2\mathrm{Re} \mathop{\sum\sum}_{c,q }  \frac{1} {c^2q^2}	\mathop{\sum\sum}_{m, n  }   {a}_{m}   {a}_{n}   (m-n)V_q(m,n,c)f\bigg(\pi\frac{m-n}{cq}\bigg). 
\end{align}
Finally, let $1 < X \Lt \min \{ M, N/ T  \}$ be a parameter to be chosen optimally later. Let $Q(\mathcal{A};X)$
be the partial sum of \eqref{def: Q(a) open S to V} restricted by $c\leqslant X$. It follows from the hybrid large sieve inequality \eqref{2eq: hybrid ls, Young, 2} in Lemma \ref{lem: Young's LS} that 
\begin{align}\label{6eq: Q=QX+error}
	Q (\mathcal{A})=Q (\mathcal{A};X)+O\bigg( \frac{M N}{X} N^{ \vepsilon}\|\mathcal{A}\|^2\bigg).
\end{align}
To see this, for dyadic $X \Lt C \Lt N / T$, we denote by $Q_{C} (\mathcal{A})$ the partial sum over $C < c \leqslant 2 C$.  Then we separate $m$ and $n$ by the integral representation of $f (w)$  in \eqref{3eq: f (w) integral}, open \(V_q(m,n;c)\)  by its definition in \eqref{2eq: defn V}, and apply Lemma \ref{lem: Young's LS} with $\tau = M^{\vepsilon} / M$ and $v =  q$ to deduce
\begin{align*}
	Q_C(\mathcal{A})
	\Lt
	\frac{MN}{C} \!\!
	\sum_{q\, \Lt N/ T} \frac1{q^2} \bigg(
	\frac{C }{M} + q \bigg) N^{\vepsilon} 
	\|\mathcal{A}\|^2                                
	\Lt \! \bigg( N+\frac{MN}{C}	\bigg)	N^\vepsilon
	\|\mathcal{A}\|^2 
	\Lt  \frac{MN}{X} 
	N^\vepsilon
	\|\mathcal{A}\|^2.
\end{align*}

\section{Application of the Poisson Summation Formula}
Let us rearrange the sum $Q(\mathcal{A};X)$ as follows
\begin{align}
	Q (\mathcal{A};X)=2\mathrm{Re}\sum_{c\leqslant X }  \frac{1} {c^2}	\mathop{\sum\sum}_{m , n  }   {a}_{m}   {a}_{n}   (m-n)  \sum_{q}\frac{V_q(m,n;c)}{q^2}f\bigg(\pi\frac{m-n}{cq}\bigg). 
\end{align}
As in \cite{Iwaniec-Li-Ortho,Young-GL(3)-Special-Points}, we introduce a non-decreasing weight function $\eta \in C^{\infty} (\BR)$ such that $ \eta (x) \equiv 0$ on $(-\infty, 1/2]$ and  $ \eta (x) \equiv 1$ on $(1, \infty]$. Now  the $q$-sum  may be rewritten as
\begin{align*}
	 \sum_{q} V_q(m,n;c) \frac{\eta(q)}{q^2}
	f\bigg(\pi\frac{m-n}{cq}\bigg).
\end{align*}
By an application of the Poisson summation formula modulo $c$, this is  transformed into
\begin{align*}
	\frac{1}{c}\sum_{\valpha (\mathrm{mod}\,c) }V_\valpha(m,n;c)\sum_{q}e\Big(\frac{\valpha q}{c}\Big)\int \eta(x)f\Big(\pi\frac{m-n}{cx}\Big)e\Big(\! -\frac{x q}{c}\Big)\frac{\nd x}{x^2}.
\end{align*} 
Recall from  \eqref{2eq: Fourier of V} that 
\begin{equation*}
	\sum_{\valpha (\mathrm{mod}\,c) }V_\valpha(m,n;c)e\Big(\frac{\valpha q}{c}\Big)=S(m,q;c)S(n,q;c). 
\end{equation*}
Let us also introduce the Fourier integral 
\begin{align}\label{defn: Fourier integral}
	\phi (w;v)=w\int \eta(x)f(w/x)e(-v x)\frac{\nd x}{x^2}. 
\end{align}

Consequently, we obtain after the Poisson summation: 
\begin{align}\label{6eq: Q=dual sum+zero frequency}
	Q (\mathcal{A};X)=S (\mathcal{A};X)+Z (\mathcal{A};X), 
\end{align}
where $Z (\mathcal{A};X)$ is the zero frequency
\begin{align}\label{6eq: Z(A;X)}
	Z (\mathcal{A};X)=\frac{2}{\pi}\sum_{c\leqslant X }  \frac{1} {c^2}	\mathop{\sum\sum}_{m , n  }   {a}_{m}   {a}_{n}   S(m,0;c)S(n,0;c)\phi \bigg(  \pi\frac{m-n}{c};0   \bigg),
\end{align}
and $S (\mathcal{A};X)$ is the dual sum
\begin{align}\label{6eq: S(A;X)}
	S (\mathcal{A};X)=\frac{2}{\pi} \mathrm{Re} \sum_{c\leqslant X }  \frac{1} {c^2}	\mathop{\sum\sum}_{m , n  }   {a}_{m}   {a}_{n} \sum_{q\neq 0}  S(m,q;c)S(n,q;c)\phi \bigg(\pi\frac{m-n}{c};\frac{q}{c}\bigg).
\end{align}

\begin{prop}\label{prop: zero frequency}
	Let $T < N$ and $1 < X \Lt \min \{ M, N/T \}$. Then
	\begin{align}\label{6eq: asym for Z}
		Z(\mathcal{A};X)=\frac{2}{\sqrt{\pi}}\varSigma(\mathcal{A})+O\bigg(\bigg(\frac{N}{X}+T\bigg)MN^\vepsilon\|\mathcal{A}\|^2\bigg).
	\end{align}
\end{prop}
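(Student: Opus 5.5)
The plan is to evaluate $\phi(w;0)$ explicitly, show it equals $\sqrt{\pi}M$ up to a negligible error outside a window $|w|\Lt T+T^{\vepsilon}M$, and then handle the completed main term and the window separately. The completed main term is compared with $\varSigma(\mathcal{A})$ via \eqref{eq: quad form,  Ramanujan}, which costs $MN/X$. The window is bounded trivially using $|S(n,0;c)|\leqslant (n,c)$, which costs $TM$.

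\emph{Step 1: evaluation of $\phi(w;0)$.} In \eqref{defn: Fourier integral} with $v=0$, substitute $u=w/x$ (so $\nd x/x^2=-\nd u/w$). For $w>0$ this gives
\begin{align*}
\phi(w;0)=\int_0^\infty f(u)\,\eta(w/u)\,\nd u .
\end{align*}
Since $f$ is odd, $\phi(\cdot\,;0)$ is even, so we may take $w>0$. Note that $\eta(w/u)=1$ for $u<w$ and $\eta(w/u)=0$ for $u>2w$, and $0\leqslant\eta\leqslant1$. Hence $|\phi(w;0)|\leqslant\int_0^\infty|f|\Lt M$ for all $w$. By the definition \eqref{3eq: defn of f(w)},
\begin{align*}
\int_0^\infty f(u)\,\nd u=\int_{-T}^{T}\upbeta(u/M)\,\nd u=\sqrt{\pi}M+O(T^{-A}),
\end{align*}
because $M\leqslant T^{1-\vepsilon}$. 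The correction is $\int_0^\infty f(u)(\eta(w/u)-1)\,\nd u$, which is supported on $u>w$. If $w>T+T^{\vepsilon}M$, then $f$ is negligible there by Gaussian decay. Therefore
\begin{align*}
\phi(w;0)=\sqrt{\pi}M+O(T^{-A})\quad\text{for } |w|> T+T^{\vepsilon}M,\qquad \phi(w;0)=O(M)\ \text{always}.
\end{align*}

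\emph{Step 2: main term.} Write $\phi=\sqrt{\pi}M+(\phi-\sqrt{\pi}M)$ in \eqref{6eq: Z(A;X)}. The first piece gives
\begin{align*}
\frac{2M}{\sqrt{\pi}}\sum_{c\leqslant X}\frac{1}{c^2}\mathop{\sum\sum}_{m,n}a_ma_nS(m,0;c)S(n,0;c),
\end{align*}
and its completion to all $c$ is exactly $\tfrac{2}{\sqrt{\pi}}\varSigma(\mathcal{A})$. For the tail $c>X$, split into dyadic ranges $C<c\leqslant2C$ and apply \eqref{eq: quad form,  Ramanujan}. Each range contributes $\Lt C^{-2}\cdot CN\log C\,\|\tau\mathcal{A}\|^2\Lt (N/C)N^{\vepsilon}\|\mathcal{A}\|^2$, using $\tau(n)\Lt N^{\vepsilon}$. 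Summing over $C\geqslant X$ and multiplying by $M$ gives $O(MN^{1+\vepsilon}X^{-1}\|\mathcal{A}\|^2)$.

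\emph{Step 3: the window.} The second piece is, up to negligible error, supported on $|m-n|\leqslant c(T+T^{\vepsilon}M)/\pi\Lt cT$, and there it is $O(M)$. Using $|S(n,0;c)|\leqslant(n,c)$ and AM--GM on $\big(|a_m|(m,c)\big)\big(|a_n|(n,c)\big)$, it is bounded by
\begin{align*}
M\sum_{c\leqslant X}\frac{1}{c^2}\sum_{m}|a_m|^2(m,c)^2\cdot\#\{n:|m-n|\Lt cT\}\Lt MT\sum_m|a_m|^2\sum_{c\leqslant X}\frac{(m,c)^2}{c}.
\end{align*}
Grouping by $d=(m,c)$, which forces $d\mid m$ and $d\mid c$,
\begin{align*}
\sum_{c\leqslant X}\frac{(m,c)^2}{c}\leqslant\sum_{d\mid m}d^2\sum_{\substack{c\leqslant X\\ d\mid c}}\frac1c\Lt \sum_{d\mid m}d\log X .
\end{align*}
This crude bound is too large, since $\sum_{d\mid m}d$ can be as large as about $m$. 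So the plan is instead to keep the factor $(m,c)(n,c)$ symmetric and bound it by $\sum_{d\mid(m,c)}\sum_{e\mid(n,c)}\varphi(d)\varphi(e)$. The $c$-sum then runs over multiples of $[d,e]$, and the $n$-count is restricted to $e\mid n$, giving $\Lt cT/e+1$. This yields a total of $\Lt TM N^{\vepsilon}\|\mathcal{A}\|^2$ after summing the divisor sums, using $X\Lt M$ to absorb the ``$+1$'' terms, whose total is $MX N^{\vepsilon}\|\mathcal{A}\|^2\leqslant M^2N^{\vepsilon}\|\mathcal{A}\|^2$, and $M^2\Lt TM$.

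I expect this divisor bookkeeping in Step 3 to be the only delicate point. Step 1 is an exact computation, and Step 2 is a direct application of \eqref{eq: quad form,  Ramanujan}. Combining Steps 1--3 gives \eqref{6eq: asym for Z}.
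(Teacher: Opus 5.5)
Your proposal is correct. It departs from the paper only in how the correction to $\phi(w;0)$ is bounded.

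Your Steps 1--2 coincide with the paper's. After $u=wx$, your correction $\int_0^\infty f(u)(\eta(w/u)-1)\,\nd u$ is exactly the paper's $w\int_0^\infty \delta(x) f(wx)\,\nd x$ with $\delta(x)=\eta(1/x)-1$. The tail $c>X$ of the main term is handled by \eqref{eq: quad form,  Ramanujan} in the same way.

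The paper never localizes in $w$. It inserts the Fourier representation of $f$ from Lemma \ref{lem: Fourier of f(w)} and integrates by parts in $x$, which gives $\int_1^2\delta'(x)\int k(r)r^{-1}e(rwx/\pi)\,\nd r\,\nd x$. With $w=\pi(m-n)/c$, the bilinear form then becomes $|\sum_n a_nS(n,0;c)e(rxn/c)|^2$ integrated over $|r|\leqslant M^{\vepsilon}/M$. The paper opens the Ramanujan sums via Cauchy--Schwarz and applies Young's hybrid large sieve, obtaining $MT(1+X/M)N^{\vepsilon}\|\mathcal{A}\|^2$.

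You instead use only three facts: the correction is $O(M)$, it is essentially supported on $|m-n|\Lt cT$, and $|S(n,0;c)|\leqslant (n,c)$. Your symmetric expansion $(m,c)(n,c)=\sum_{d\mid(m,c)}\sum_{e\mid(n,c)}\varphi(d)\varphi(e)$, combined with AM--GM, does close. The main part is $\Lt MT\log(2X)\sum_m|a_m|^2\sum_{d\mid m}\varphi(d)\sum_{e\leqslant X}(d,e)/(de)$, and $\sum_{e\leqslant X}(d,e)/e\Lt \tau(d)\log(2X)$, so it is $\Lt MTN^{\vepsilon}\|\mathcal{A}\|^2$.

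Two small corrections to your write-up. First, the ``$+1$'' terms are actually $\Lt MN^{\vepsilon}\|\mathcal{A}\|^2$ outright, because $\sum_{e\leqslant X}(d,e)^2/(de)\Lt \sigma(d)\log(2X)/d$; so for this term you do not need $X\Lt M$. Second, the real defect of your first crude bound is a lost factor $X$, since $(m,c)\leqslant X$, not a factor of size $m$.

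Your route is more elementary. It works because Ramanujan sums are $O(N^{\vepsilon})$ on average, so a trivial count over the window loses nothing, and the large sieve gains nothing here. The paper's route buys uniformity with its treatment of the dual sum. There $S(n,q;c)$ with $q\neq 0$ is only $O(c^{1/2+\vepsilon})$, a trivial count would lose a factor of about $c$, and the Fourier separation plus large sieve is genuinely needed.
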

\begin{prop}\label{prop: dual sum}
	Let $T < N$ and $1 < X \Lt \min \{ M, N/T \}$. Then
	\begin{align}\label{eq: asymptotic for S}
		S(\mathcal{A};X)=S_0(\mathcal{A};X)+O(\|\mathcal{A}\|^2),
	\end{align}
	where 
	\begin{align}\label{eq: bound for S0}
		S_0(\mathcal{A};X)\Lt T^2N^{\vepsilon} \! \sum_{c\leqslant X}\frac{1}{c}\sum_{0<|q|\leqslant c  T N^\vepsilon /M}\frac{1}{|q|} \! \int_{- N^\vepsilon/M}^{N^\vepsilon/M}\bigg|\sum_n a_n S(n,q;c)e\Big(\frac{n r}{c}\Big)\bigg|^2\nd r,
	\end{align}
	and hence 
	\begin{align}\label{6eq: bound for dual sum}
		S(\mathcal{A};X)\Lt T^2XN^\vepsilon \|\mathcal{A}\|^2.
	\end{align}
\end{prop}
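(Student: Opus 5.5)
We need to prove Proposition \ref{prop: dual sum}: the dual sum $S(\mathcal{A};X)$ is, up to $O(\|\mathcal{A}\|^2)$, a quantity $S_0(\mathcal{A};X)$ obeying \eqref{eq: bound for S0}, and then \eqref{6eq: bound for dual sum} follows. Everything turns on the Fourier integral $\phi(w;v)$ of \eqref{defn: Fourier integral} with $w=\pi(m-n)/c$ and $v=q/c$, $q\neq 0$.

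\emph{Step 1: express $\phi$ so that $m,n$ separate.} Insert Lemma \ref{lem: Fourier of f(w)} into \eqref{defn: Fourier integral}:
\[
\phi(w;v)=\frac{2M}{i\sqrt\pi}\int \upbeta(Mr)\sin(2Tr)\, w\int \eta(x)\, e\Big(\frac{rw}{\pi x}-vx\Big)\frac{\nd x}{x^2}\,\nd r .
\]
Substitute $x\mapsto 1/x$ in the inner integral; it becomes $\int \eta(1/x)\,e(rwx/\pi - v/x)\,\nd x$ over $0<x<2$. For fixed $r$ this is a Fourier transform in $x$ of $\eta(1/x)e(-v/x)$ evaluated at $-rw/\pi=-r(m-n)/c$. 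The factor $w$ is removed by integrating by parts in $r$, or equivalently by writing $w\,e(rwx/\pi)=(2ix)^{-1}\partial_r e(rwx/\pi)$ and moving $\partial_r$ onto $\upbeta(Mr)\sin(2Tr)$, which costs a factor $\ll T$. After rescaling $r$ by $x$, the $(m,n)$-dependence becomes $e(mrx'/c)\,e(-nrx'/c)$, where $x'$ is a new variable. The range $|r|\le N^\vepsilon/M$ is enforced by the Gaussian $\upbeta(Mr)$, up to negligible error.

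\emph{Step 2: truncate $q$.} In the inner $x$-integral the phase is $rwx/\pi - v/x$, whose derivative in $x$ is $r(m-n)/c+q/(cx^2)$. We have $|m-n|\asymp cT$, essentially forced by $f$ since $f$ is concentrated at $|w|\approx T$. Together with $|r|\le N^\vepsilon/M$, this shows the phase is nonstationary once $|q|\gg cTN^\vepsilon/M$. Repeated integration by parts (the weight $\eta(1/x)$ is smooth with bounded derivatives) then makes these terms negligible, which is where the $O(\|\mathcal{A}\|^2)$ comes from after summing trivially using the Weil bound. For $|q|\le cTN^\vepsilon/M$, I would bound the $x$-integral by $\ll 1/|q|$-type decay, or simply by $O(1)$ and sum dyadically. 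Using Step 1, the $x'$ variable contributes a measure of size about $1/|q|$, because the stationary point has width $\asymp \sqrt{c/|q|}\cdot$(amplitude) and the integral over $x$ weighted by $1/x^2$ concentrates where $x\asymp$ const.

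\emph{Step 3: Cauchy--Schwarz.} Apply $|a b|\le(|a|^2+|b|^2)/2$ to $\sum_m a_mS(m,q;c)e(mr/c)\cdot\overline{\sum_n a_nS(n,q;c)e(nr/c)}$ pointwise in $r$. Collecting the prefactor $M\cdot T\cdot (1/c^2)$ from $f$, the derivative and \eqref{6eq: S(A;X)}, and the $r$-measure, gives \eqref{eq: bound for S0} with $T^2/(c|q|)$. The extra $T/M$ is absorbed by the length $cT/M$ of the $q$-range; this bookkeeping is the main obstacle and may require using the factor $w\asymp T$ directly rather than integrating by parts.

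\emph{Step 4: deduce \eqref{6eq: bound for dual sum}.} Expand $S(n,q;c)$ and apply Lemma \ref{lem: Young's LS}, or more simply the large sieve over $\alpha \bmod c$ combined with the $r$-integral, of length $\tau\asymp N^\vepsilon/M$ and with $v=1$, $\gamma=1$. For fixed $c$ this gives
\[
\int\Big|\sum_n a_nS(n,q;c)e(nr/c)\Big|^2\nd r\ll c\,(c/M+1)\,\|\mathcal{A}\|^2,
\]
using the $L^2$ mean over $\alpha$ and Parseval in $r$. Since $c\le X\le M$, this is $\ll c\|\mathcal{A}\|^2$. Summing $\sum_{|q|\le cT/M}1/|q|\ll N^\vepsilon$ and then $\sum_{c\le X}(1/c)\cdot c\ll X$ gives $T^2XN^\vepsilon\|\mathcal{A}\|^2$.
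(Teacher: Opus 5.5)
Your Step 1 is heading toward the paper's route (the computation behind Lemma \ref{lem: Bound for Fourier transform}), but Steps 2--4 have genuine gaps.

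\textbf{The weight in \eqref{eq: bound for S0}.} The weight $T^2/(c|q|)$ is $c^{-2}$ times a bound $\hat\phi(r/\pi;q/c)\ll T^2c/|q|$, uniform in $r$, for the Fourier transform of $\phi(\cdot\,;v)$ in $w$. You never obtain this bound. Your stationary-phase heuristic gives $\sqrt{c/|q|}$, not $c/|q|$. No $q$-independent weight such as $MT/c^2$ can be absorbed into the $q$-range either, because $|\hat\phi(r;v)|\asymp T^2/|v|$ at $r=v/T$ whenever $|v|\ll T/M$. A crude bound also fails: if you keep the $x$-integral outside and take absolute values inside, the substitution $r'=rx$ leaves the weight $\int\eta(y)|k'(r'y)|\,\nd y\asymp T/|r'|$. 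That weight is not integrable at $r'=0$ against $|\sum_na_nS(n,q;c)e(nr'/c)|^2$.

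The missing step is to complete the rescaling so that the $x$-integral no longer involves $m,n$. This gives $\hat\phi(r;v)=\pi\int\eta(x)k'(\pi rx)e(-vx)\,\nd x$, with $k$ as in \eqref{7eq: k(r)}. Then integrate by parts once against $e(-vx)$. The derivative lands either on $\eta'$, which is supported in $[1/2,1]$ and contributes $\ll MT$, or on $k'(\pi rx)$, where $\pi|r|\int_{1/2}^\infty|k''(\pi rx)|\,\nd x\ll T^2$. Hence $\hat\phi(r/\pi;v)\ll T^2/|v|$. Fourier inversion in $w$ then makes the $(m,n)$-sum the square $|\sum_na_nS(n,q;c)e(nr/c)|^2$, since $a_n$ and $S$ are real. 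The Gaussian decay \eqref{eq: bound for phi(r), 2} justifies the cut $|r|\leqslant N^\vepsilon/M$.

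\textbf{The $q$-truncation in Step 2.} Your localization $|m-n|\asymp cT$ is false. The factor $f(w/x)$ with $x\geqslant 1/2$ only forces $x\approx|w|/T$. So $\phi(w;v)$ lives on $T\ll|w|\ll T^{2+\vepsilon}/(|v|M)$; for example, $\phi(w;0)=\sqrt{\pi}M+O(T^{-A})$ for all $|w|>T+T^{\vepsilon}M$. Your pointwise-in-$r$ nonstationary-phase argument therefore cannot cut $q$ at $cTN^\vepsilon/M$ when $|m-n|$ is large. The truncation must be proved either on $\phi$ itself via $x\mapsto|w|x/T$ (Lemma \ref{lem: bound for f}), or on the $w$-free $\hat\phi$ by repeated partial integration.

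\textbf{Step 4.} The bound $\int|\sum_na_nS(n,q;c)e(nr/c)|^2\,\nd r\ll c(c/M+1)\|\mathcal{A}\|^2$ is false for an individual $q$. Expanding $S(n,q;c)$ and applying Cauchy--Schwarz in $\alpha$ costs a factor $\varphi(c)$, and this loss is real. Take $c$ prime, $(q,c)=1$, $\tau=N^\vepsilon/M$, and $a_n=\mathrm{sgn}\,S(n,q;c)$ on an interval of length $\asymp c/\tau$; then the left side is $\gg c^2\|\mathcal{A}\|^2$.

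You must average over $q$. Cover $0<|q|\leqslant cTN^\vepsilon/M$ by complete residue systems modulo $c$, with weight $\leqslant 1$ on $|q|\leqslant c$ and $\leqslant 1/k$ on $kc<|q|\leqslant(k+1)c$. Then use $\sum_{q\,(\mathrm{mod}\,c)}S(m,q;c)S(n,q;c)=c\sumx_{\alpha}e(\alpha(m-n)/c)$ to collapse the Kloosterman sums. This leaves $T^2N^\vepsilon\sum_{c\leqslant X}\int\sumx_{\alpha}|\sum_na_ne(\alpha n/c)e(nr/c)|^2\,\nd r$, in which the factor $1/c$ has been used up. A large sieve for each modulus separately now gives only $\sum_{c\leqslant X}c\asymp X^2$, i.e.\ $T^2X^2N^\vepsilon\|\mathcal{A}\|^2$.

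To reach \eqref{6eq: bound for dual sum}, first bound $\sum_{c\leqslant X}F_c\leqslant X\sum_{c\leqslant X}F_c/c$. Then apply Young's hybrid large sieve jointly over all moduli $c\leqslant X$, with $\tau=N^\vepsilon/M$ and $v=1$. This yields $X(\tau X+\log X)\|\mathcal{A}\|^2\ll XN^\vepsilon\|\mathcal{A}\|^2$, since $X\ll M$.
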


\section{Analysis of the Fourier Integral}

\begin{lem}\label{lem: bound for f}
	 $ \phi (w; v) $ is negligibly small unless $ |w| \Gt T$ and $ v \Lt   T^{1+\vepsilon}   / M $. 
\end{lem}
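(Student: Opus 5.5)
The lemma is a statement about the single oscillatory integral \eqref{defn: Fourier integral}, and I would treat the two conditions separately. For the first, I would use only the support and decay of $f$ from \eqref{3eq: defn of f(w)}. Since $\upbeta(r)=\exp(-r^2)$, the function $f(y)$ and all of its derivatives are $O_A(T^{-A})$ unless $\big||y|-T\big|\leqslant M T^{\vepsilon}$. Here $M\leqslant T^{1-\vepsilon}$, so on that range $|y|\asymp T$.

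\emph{Step 1: the condition on $w$.} Put $y=w/x$ with $x\geqslant 1/2$, which is forced by the support of $\eta$. Then $f(w/x)$ is negligible unless $|w|/x\asymp T$, which gives $|w|\Gt xT\Gt T$. To make this uniform in $x$, I would bound the integral directly:
\begin{align*}
|\phi(w;v)|\leqslant |w|\int_{1/2}^{\infty}|f(w/x)|\,\frac{\nd x}{x^2}.
\end{align*}
If $|w|\leqslant cT$ for a small constant $c>0$, then $|w/x|\leqslant 2cT$ for every $x\geqslant 1/2$. Hence $|f(w/x)|\leqslant 2\exp\big(-(T/(2M))^2\big)$, which is negligible because $M\leqslant T^{1-\vepsilon}$. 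Since $\int_{1/2}^{\infty}x^{-2}\,\nd x=2$, this proves that $\phi(w;v)$ is negligible unless $|w|\Gt T$.

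\emph{Step 2: localizing $x$.} Assume now $|w|\Gt T$. I would insert a smooth partition of unity in $x$ and keep only the piece where $|w|/x$ lies in $\big[T-MT^{\vepsilon},\,T+MT^{\vepsilon}\big]$. This is an interval $x\asymp |w|/T$ of relative length about $MT^{\vepsilon-1}$, and the cutoff $\rho(x)$ can be chosen with $\rho^{(j)}\Lt_j \big(T^{1-\vepsilon}/(M x)\big)^j$. The complementary piece is negligible: there $f(w/x)=O_A((T+|w|/x)^{-A})$, and the factor $|w|\,x^{-2}$ is integrable, so the total is $O_A(T^{-A})$. For polynomial dependence on $|w|$ I would note that $|w|\Lt N^{1+\vepsilon}$ in the application, or simply absorb it using the Gaussian decay.

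\emph{Step 3: the condition on $v$, by integration by parts.} On the localized piece the phase is $-2\pi v x$, which is linear, and the amplitude is
\begin{align*}
A(x)=\eta(x)\rho(x)f(w/x)x^{-2}.
\end{align*}
I would verify the derivative bounds $A^{(j)}\Lt_j \|A\|_\infty\,(T^{1+\vepsilon}/M)^j$ using the chain rule and Faà di Bruno. The key computation is
\begin{align*}
\frac{\nd}{\nd x}f(w/x)=-\frac{w}{x^2}f'(w/x),
\end{align*}
where $|w|/x^2\asymp T/x\Lt T$ and each derivative of $\upbeta((y\mp T)/M)$ costs $T^{\vepsilon}/M$ on the effective range. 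This is the step that produces the scale $T/M$. Derivatives of $x^{-2}$ cost $1/x\Lt 1$; derivatives of $\eta$ are $O(1)$ and supported on $[1/2,1]$; and derivatives of $\rho$ cost at most $T/M$. Integrating by parts $j$ times then gives
\begin{align*}
\phi(w;v)\Lt |w|\,\frac{T^{\vepsilon}M}{T}\Big(\frac{T^{1+\vepsilon}}{M|v|}\Big)^j,
\end{align*}
which is negligible once $|v|\geqslant T^{1+2\vepsilon}/M$. After renaming $\vepsilon$, this is exactly the condition $v\Lt T^{1+\vepsilon}/M$ in the lemma.

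\emph{Main obstacle.} The only delicate point is making the derivative bounds in Step 3 uniform, since $x$ can be as large as $|w|/T$. Working in the localized variable handles this, and each derivative in fact gains a factor $1/x$. As an alternative, I would substitute the integral representation of Lemma \ref{lem: Fourier of f(w)} into \eqref{defn: Fourier integral} and read off the stationary point $v\approx r w/x^2$ with $|r|\leqslant T^{\vepsilon}/M$. This gives the same threshold and can serve as a cross-check.
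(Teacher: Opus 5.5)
Your proposal is correct and follows essentially the paper's argument: a trivial bound disposes of $|w| \Lt T$, and repeated integration by parts against the linear phase $e(-vx)$, with each derivative of the amplitude costing $T^{1+\vepsilon}/M$, yields the condition $|v| \Lt T^{1+\vepsilon}/M$. The only difference is that the paper first substitutes $x \to |w|x/T$, which centres $f(T/x)$ at $|x-1| < T^{\vepsilon}M/T$ with prefactor $T$ and so makes the bounds uniform in $w$ without your explicit partition of unity (and without the harmless extra factor $|w|/T$ in your final estimate).
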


\begin{proof}
By the change $ x \ra |w| x / T $, it follows from \eqref{3eq: defn of f(w)} and \eqref{defn: Fourier integral} that 
	\begin{align*} 
	\phi (w; v) = T \int_{\frac T {2 | w|} }^{\infty}   \eta \bigg(\frac {|w| x } {T} \bigg) f \bigg(\frac {T} {x} \bigg) e \bigg( \! - \frac {  |w | v x } {T}  \bigg) \frac {\nd x} {x^2} ,
	\end{align*}
with 
\begin{align*}
f \bigg(\frac {T} {x} \bigg) =	\upbeta \bigg( \frac T M  \bigg(1-\frac 1 x   \bigg)  \bigg) - \upbeta \bigg( \frac T M  \bigg(1+\frac 1 x  \bigg)   \bigg) , \qquad \upbeta (r) = \exp \big( \!  - r^2\big) . 
\end{align*}
It is clear that 
$f (T/x)$ and its derivatives are   exponentially small   outside the range $ |x-1| < T^{\vepsilon}M/ T$. Thus   the integral is negligibly small if  $ | w | \Lt T $.  Moreover, for $x$ close to $1$,  we have  
\begin{align*}
	\frac {\nd^j f(T/x)} {\nd x^j} \Lt_j \lp \frac T M \rp^{j}, 
\end{align*}
and hence
\begin{align*}
	\frac {\nd^j ( \eta  (  {|w| x } / {T}  ) f(T/x))} {\nd x^j} \Lt_j \lp \frac {|w|} {T} +  \frac T M \rp^{j}. 
\end{align*} 
It follows that the Fourier integral above is negligibly small unless   \begin{align*}
	\frac{|w v |  }{T}   \Lt T^{\vepsilon} \lp \frac {|w|} {T} +  \frac T M \rp, 
\end{align*} or, in the case that $|w| \Gt T$,  unless
\begin{align*}
	 |v| 
	 \Lt   \frac {T^{1+\vepsilon} } { M}  . 
\end{align*}    
\end{proof}

Next, for $v \neq 0$, we consider the Fourier transform
\begin{align*}
	\hat{\phi}(r;v)=\int \phi(w;v) e(-rw)\nd w.
\end{align*}

\begin{lem}\label{lem: Bound for Fourier transform}
Define
	\begin{equation}\label{7eq: k(r)}
		k (r)= \frac 1 {\sqrt{\pi}} M\upbeta (Mr) \sin (2Tr) .
	\end{equation}
 	We have 
	\begin{align}\label{7eq: hat phi}
		\hat{\phi}(r;v)= \pi \int \eta(x)k' (\pi r x)e(-v x) {\nd x} ,
	\end{align}
	and consequently the uniform bounds 
	\begin{align}\label{eq: bounds for phi(r)}
		\hat{\phi} \Big(\frac{r}{\pi};v  \Big)\Lt \frac{T}{|r|}, \qquad 
		\hat{\phi} \Big(\frac{r}{\pi};v \Big)\Lt \frac{T^2}{|v|},
	\end{align}
	and for $|r|>1/M$,
	\begin{align}\label{eq: bound for phi(r), 2}
		\hat{\phi} \Big(\frac{r}{\pi};v  \Big) \Lt MT \, \exp \bigg( \! - \frac{M^2 r^2}{ 4 } \bigg).
	\end{align}
\end{lem}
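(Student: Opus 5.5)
The plan is to compute $\hat\phi(r;v)$ by exchanging the $x$- and $w$-integrations and evaluating the inner $w$-integral through Lemma \ref{lem: Fourier of f(w)}. After that, each of the three bounds should follow from a short estimate of the resulting one-dimensional integral.

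\emph{Step 1: the identity \eqref{7eq: hat phi}.} By \eqref{defn: Fourier integral},
\[ \hat\phi(r;v)=\int \eta(x) e(-vx)\bigg(\int w f(w/x) e(-rw)\,\nd w\bigg)\frac{\nd x}{x^2}. \]
The interchange is justified by absolute convergence: $f$ has Gaussian decay, and the factor $1/x^2$ makes the $x$-integral converge. Substituting $w=xu$ turns the inner integral into $x^2 F(rx)$, where
\[ F(s)=\int u f(u) e(-su)\,\nd u . \]
In terms of $k$ from \eqref{7eq: k(r)}, formula \eqref{3eq: f (w) integral} reads
\[ f(u)=\frac{2\pi}{i}\int k(\pi s)\, e(su)\,\nd s , \]
so by Fourier inversion $\int f(u)e(-su)\,\nd u=\frac{2\pi}{i}k(\pi s)$. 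Multiplying by $u$ corresponds to applying $-\frac{1}{2\pi i}\frac{\nd}{\nd s}$, which gives
\[ F(s)=-\frac{1}{2\pi i}\cdot\frac{2\pi}{i}\,\pi k'(\pi s)=\pi k'(\pi s). \]
The factor $x^2$ then cancels against $1/x^2$, and \eqref{7eq: hat phi} follows.

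\emph{Step 2: the bounds.} Write
\[ k'(y)=\frac{M}{\sqrt\pi}\big(M\upbeta'(My)\sin(2Ty)+2T\upbeta(My)\cos(2Ty)\big). \]
This gives $k'\Lt MT$ pointwise, $\int|k'|\Lt T$ and $\int|k''|\Lt T^2$, using $M\leqslant T$ throughout.

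For the first bound in \eqref{eq: bounds for phi(r)}, take absolute values in $\hat\phi(r/\pi;v)=\pi\int\eta(x)k'(rx)e(-vx)\,\nd x$ and substitute $y=rx$. This gives $\Lt |r|^{-1}\int|k'(y)|\,\nd y\Lt T/|r|$.

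For the second bound, integrate by parts once in $x$ against $e(-vx)$. The boundary terms vanish, since $\eta=0$ near $1/2$ and $k'(rx)$ decays as $x\to\infty$; this is a point to double-check when $r$ is near $0$. The result is $|v|^{-1}\big(\int|\eta'(x)||k'(rx)|\,\nd x+\int|r||k''(rx)|\,\nd x\big)$. The first term is bounded by $\sup|k'|\Lt MT\leqslant T^2$, and the second by $\int|k''|\Lt T^2$. Together these give $T^2/|v|$.

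For \eqref{eq: bound for phi(r), 2}, use that $x\geqslant 1/2$ on the support of $\eta$. This gives
\[ \hat\phi(r/\pi;v)\Lt |r|^{-1}\int_{|r|/2}^\infty |k'(y)|\,\nd y\Lt |r|^{-1}\int_{|r|/2}^{\infty}MT(1+My)e^{-M^2y^2}\,\nd y . \]
The Gaussian tail bound turns the last integral into $\Lt T(1+M|r|)(M|r|)^{-1}e^{-M^2r^2/4}$, and with $|r|>1/M$ the remaining factors are bounded by a constant times $M$. The total is $\Lt MT\exp(-M^2r^2/4)$.

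The main point needing care is the sign and normalization bookkeeping in Step 1, namely converting \eqref{3eq: f (w) integral} into a Fourier pair with $k$ and moving the factor $u$ onto a derivative. The estimates in Step 2 are routine once \eqref{7eq: hat phi} is in hand.
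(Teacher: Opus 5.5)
Your route is the paper's. You evaluate the inner $w$-integral by Fourier inversion of \eqref{3eq: f (w) integral} and convert the factor $w$ into a derivative; your $F(s)=\pi k'(\pi s)$, and hence \eqref{7eq: hat phi}, are correct. The three bounds then come from the same trivial estimates and the same single integration by parts that the paper uses.

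The step that fails as written is the justification of the interchange in Step~1: the double integral is \emph{not} absolutely convergent. Substituting $w=xu$ for fixed $x$ gives
\[
\int \eta(x)\int \frac{|w\, f(w/x)|}{x^{2}}\,\nd w\,\nd x=\int \eta(x)\,\nd x\cdot\int |u f(u)|\,\nd u=\infty ,
\]
because the Jacobian $x$ and the factor $w=xu$ exactly cancel $x^{-2}$. Convergence of the $x$-integral for each fixed $w$ does not give Fubini--Tonelli. The paper flags precisely this issue and repairs it in two steps:
\begin{enumerate}
\item write the factor $w$ as $-\frac{1}{2\pi i}\partial_r$ in front of $\iint \eta(x) f(w/x)\, e(-rw-vx)\, x^{-2}\,\nd x\,\nd w$;
\item replace $x^{-2}$ by $x^{-2-\vepsilon}$.
\end{enumerate}
The same substitution now gives the finite majorant $\int|f(u)|\,\nd u\cdot\int \eta(x)x^{-1-\vepsilon}\,\nd x$, so the interchange is legitimate, and one lets $\vepsilon\to 0^+$ at the end. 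The order matters. If $w$ is kept inside the integrand, as in your version, the damping $x^{-\vepsilon}$ alone does not help, because $\int\eta(x)x^{-\vepsilon}\,\nd x$ still diverges.

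On your concern about $r$ near $0$ in the integration by parts: for each fixed $r\neq 0$, $k'(rx)$ is rapidly decreasing in $x$, so the boundary term at infinity does vanish. At $r=0$ itself, the right side of \eqref{7eq: hat phi} is $\pi k'(0)\int\eta(x)e(-vx)\,\nd x$ with $k'(0)=2MT/\sqrt{\pi}\neq 0$, which diverges. So the identity should be read for $r\neq 0$. That is all that is needed later, since it is used inside an $r$-integral. Once the regularization is supplied, the rest of your argument goes through unchanged.
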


\begin{proof}
	By definition, we have 
	\begin{align*}
		\hat{\phi}(r;v)=&\int e(- rw) w \bigg(\int   \eta(x)f(w/x)e(-v x)\frac{\nd x}{x^2}\bigg)\nd w\\
		=& -\frac{1}{2\pi i}\frac{\partial}{\partial r}\iint \eta(x)f(w /x) e(- rw-v x) \frac{\nd x \nd w }{x^2}.
	\end{align*} 
After  reversing the order of integrations,   we obtain \eqref{7eq: hat phi} by a direct evaluation of the $w$-integral using the Fourier inversion of \eqref{3eq: f (w) integral} in Lemma \ref{lem: Fourier of f(w)}. However, the double integral is not absolutely
integrable. This issue may be easily addressed by replacing $x^2$ by $x^{2+\vepsilon}$ 
in the denominator and letting  $\vepsilon \rightarrow 0^+$ at the end. 

Since  
\begin{align*}
	k' (r) \Lt ( M T + M^3 |r|) \upbeta (M r), 
\end{align*}
by trivial estimation, it follows from \eqref{7eq: hat phi} that 
\begin{align*}
		\hat{\phi}(r/\pi ; v) & \Lt \int_{   1 / 2}^{\infty} \big(MT + M^3 |rx| \big)  \upbeta (M r x) \nd x  = \frac 1 {|r |}  \int_{     \frac {M |r|}  2}^{\infty}  (T + M  x  )  \upbeta ( x) \nd x, 
\end{align*}
and hence the first bound in \eqref{eq: bounds for phi(r)} and \eqref{eq: bound for phi(r), 2}; for the former, we just extend the domain to $(0, \infty)$.   
By applying partial integration to  \eqref{7eq: hat phi}, we have
\begin{align*}
	\hat{\phi}(r;v) = \frac 1 {2 i v} \int \big(\eta'(x)k' (\pi r x) + \pi r \eta (x) k''(\pi r x) \big)e(-v x) {\nd x}. 
\end{align*}
Note that 
\begin{align*}
	k'' (r) \Lt  ( M T^2 + M^3T |r| + M^5 r^2  ) \upbeta (M r), 
\end{align*}
so we may prove the second bound in  \eqref{eq: bounds for phi(r)} as well by trivial estimation. 
\end{proof}

\section{Proof of Propositions \ref{prop: zero frequency} and \ref{prop: dual sum}}

\subsection{Treatment of the Zero Frequency}\label{subsec: Zero Frequency}

Let $\delta   (x)=\eta(1/x)-1$. Note that $\delta   '(x)$ is supported on $[1,2]$. By \eqref{defn: Fourier integral}, for $w \neq 0$, we have   
\begin{align*}
	\phi(w;0)=w \int_0^\infty \eta(1/x)f(w x)\nd x=\int_0^\infty f(x)\nd x+w \int_0^\infty    \delta   (x)f(w x)\nd x.
\end{align*}
Recall the definition from \eqref{3eq: defn of f(w)}:
\begin{align*}
	f (x)=  \upbeta \bigg( \frac{x - T }{M }\bigg) - \upbeta \bigg( \frac{x + T }{M }\bigg), \qquad \upbeta (r) = \exp \big( \!  - r^2\big). 
\end{align*} It is clear that 
\begin{align*}
	\int_0^\infty f(x)\nd x  = \! \int_{-\infty}^{\infty} \! \upbeta \bigg(\frac {x-T} {M} \bigg) \nd x + O \bigg(\frac{M^2}{T}\upbeta \bigg( \frac{T }{M }\bigg)\bigg)  = \sqrt{\pi} M +  O\bigg(\frac{M^2}{T}\upbeta \bigg( \frac{T }{M }\bigg)\bigg) ;
\end{align*}
 the error is negligibly small for $M \leqslant T^{1-\vepsilon}$. Therefore, in view of \eqref{1eq: Sigma(A)} and \eqref{6eq: Z(A;X)}, the first integral contributes 
 \begin{align*}
 	\frac{2}{\sqrt{\pi}}M\sum_{c\leqslant X}\frac{1}{c^2}\bigg(\sum_n a_nS(n,0;c)\bigg)^2 + O \big(\|\mathcal{A}\|^2\big) =\frac{2}{\sqrt{\pi}}\varSigma(\mathcal{A})+O\bigg(\frac{MN^{1+\vepsilon}}{X}\|\mathcal{A}\|^2\bigg),
 \end{align*}
 where the tail of the sum has been estimated trivially by \eqref{eq: quad form,  Ramanujan}. As for the second integral, in view of \eqref{3eq: f (w) integral} in Lemma \ref{lem: Fourier of f(w)}, we may write  
 \begin{align*}
 	f(wx)= - 2 i  \int_{-\infty}^{\infty} k (r)  \exp (2i r w x) \nd r , \qquad k (r)= \frac 1 {\sqrt{\pi}} M\upbeta (Mr) \sin (2Tr). 
 \end{align*}
Note that $k (r)$ was also defined in \eqref{7eq: k(r)}. Consequently, by this expression and partial integration, the second integral reads
\begin{align*}
	w \int_0^\infty   \delta   (x)f(w x)\nd x & = -2 i   w   \int_0^\infty \delta   (x) \int_{-\infty}^{\infty} k (r) e(rwx/\pi ) \nd r\, \nd x \\
	& =   \int_1^2 \delta' (x) \int_{-\infty}^{\infty} \frac {k (r)} {r} e(rwx/\pi )    {\nd r}   \, \nd x . 
\end{align*} 
Thus one is reduced to estimating 
\begin{align*}
	\int_{-\infty}^{\infty}
	\frac{k(r)}{ r} 
	\sum_{c\leqslant X}\frac{1}{c^2}
	\mathop{\sum \sum}_{m , n}
	a_m a_n S(m,0;c)S(n,0;c)
	e\bigg(\frac{r x (m-n)}{c}\bigg)
	\nd r,
\end{align*}
for any $1\leqslant x \leqslant 2$. By the definition of $k (r)$, up to a negligible error,  this is bounded by 
\begin{align*}
	MT\int_{-M^\vepsilon/M}^{M^\vepsilon/M}
	\sum_{c\leqslant X}\frac{1}{c^2}
	\bigg|
	\sum_n a_n S(n,0;c)e\Big(\frac{r xn}{c}\Big)
	\bigg|^2
	  {\nd r}   .
\end{align*}
Further, open the Ramanujan sum and apply Cauchy–Schwarz to bound this by 
\begin{align*}
	MT\int_{-M^\vepsilon/M}^{M^\vepsilon/M}
	\sum_{c\leqslant X}\frac{1}{c} \  \sumx_{   \valpha      (\mathrm{mod} \, c) }
	\bigg|
	\sum_n a_n e\Big(\frac{\valpha n}{c}\Big)e\Big(\frac{r xn}{c}\Big)
	\bigg|^2
	\nd r. 
\end{align*}
Finally, the hybrid large sieve of Young in \eqref{2eq: hybrid ls, Young, 2}, with $\tau = M^\vepsilon/M$, $v = 1/x$, and $C = X$, 
yields the  bound $O(M T(1+X/M) N^{\vepsilon}\|\mathcal{A}\|^2)=O(M T N^{\vepsilon} \|\mathcal{A}\|^2)$. This completes the proof of Proposition \ref{prop: zero frequency}.

\subsection{Treatment of the Dual Sum}
Now we consider the dual sum $S (\boldsymbol{a}; X) $ as given in \eqref{6eq: S(A;X)} and prove Proposition \ref{prop: dual sum}. 

First of all,   Lemma \ref{lem: bound for f} suggests that we may assume in practice that $0<|q|\leqslant cTN^\vepsilon/M$ as in our case $v=q/c$. By Fourier inversion,
\begin{align*}
	 \phi \bigg(\pi\frac{m-n}{c};\frac{q}{c}\bigg)=\frac 1 {\pi} \int_{-\infty}^{\infty}  \hat{\phi}\bigg(\frac{r}{\pi};\frac{q}{c}\bigg) e \Big(\frac{m-n}{c} r \Big)\nd r.
\end{align*}
The bound \eqref{eq: bound for phi(r), 2} for $\hat{\phi}(r/\pi; v)$ in Lemma \ref{lem: Bound for Fourier transform} suggests that we may truncate the integral at $|r| = N^\vepsilon/M$. Therefore, up to a negligible error, $S(\mathcal{A}; X)$ can be rewritten as 
\begin{align*}
\frac 2 {\pi^2} \mathrm{Re}	\sum_{c\leqslant X }  \frac{1} {c^2}	 \sum_{0<|q|\leqslant cTN^\vepsilon\! /M}  \int_{-N^\vepsilon/M}^{N^\vepsilon/M} \hat{\phi}\bigg(\frac{r}{\pi};\frac{q}{c}\bigg)\bigg|\sum_n a_nS(n,q;c)e\Big(\frac{n r}{c}\Big)\bigg|^2 \nd r. 
\end{align*}
Now \eqref{eq: asymptotic for S} and \eqref{eq: bound for S0} are  direct consequences of the second bound in  \eqref{eq: bounds for phi(r)} for $\hat{\phi}(r/\pi; v)$ in Lemma \ref{lem: Bound for Fourier transform}.

Finally, we need to estimate 
\begin{align*}
	T^2N^{\vepsilon} \! \sum_{c\leqslant X}\frac{1}{c}\sum_{0<|q|\leqslant c  T N^\vepsilon /M}\frac{1}{|q|} \! \int_{- N^\vepsilon/M}^{N^\vepsilon/M}\bigg|\sum_n a_n S(n,q;c)e\Big(\frac{n r}{c}\Big)\bigg|^2\nd r,
\end{align*}
and prove that it has the bound as in \eqref{6eq: bound for dual sum}. To this end, 
extend the sum over $q$ to $O( T N^{\vepsilon}/ M)$ many complete sums modulo $c$, bound $1/|q|$ by either $1 $ or $1/   \lfloor (|q|-1)/ c \rfloor $ according as $|q| \leqslant c$ or not,  open the square and the Kloosterman sums, and execute the summation over $q$. Then we arrive at the expression
\begin{align*}
	T^2 N^\vepsilon   \sum_{c \leqslant X}    \int_{- N^\vepsilon/ M}^{N^\vepsilon/ M} \, \sumx_{\valpha (\mathrm{mod} \, c) }\bigg|\sum_n a_n e\Big(\frac{\valpha n}{c}\Big) e \Big(\frac{n r}{c}\Big)\bigg|^2\nd r, 
\end{align*}
and an application of the hybrid large sieve of Young in \eqref{2eq: hybrid ls, Young, 2} yields the desired bound  $   O (T^2X N^\vepsilon \|\mathcal{A}\|^2 ) $.

\section{Proof of Proposition \ref{prop: Kloosterman}}

On the identities or asymptotics in \eqref{4eq: C=D+P}, \eqref{4eq: bound for diag}, \eqref{5eq: P(A)=Q(A)+O}, \eqref{6eq: Q=QX+error}, \eqref{6eq: Q=dual sum+zero frequency}, \eqref{6eq: asym for Z}, and \eqref{6eq: bound for dual sum}, we have established 
\begin{align*}
	{S} (\mathcal{A}) +{T} (\mathcal{A})=\frac{2}{\sqrt{\pi}}\varSigma(\mathcal{A}) + O\bigg(    \frac{T N }{M } +\frac{MN}{X}+T^2X\bigg)N^\vepsilon\|\mathcal{A}\|^2\bigg),
\end{align*}
for any  $1 < X \Lt \min \{ M, N/T \}$. Thus, for $T < \sqrt{MN}$,  we obtain \eqref{1eq: Kloosterman} by choosing $ X = \min \big\{ M,   \sqrt{MN}/ T \big\} $.

 \section{Proof of Proposition \ref{prop: Eisenstein}} 

In this section, we investigate the Eisenstein contribution ${T} (\mathcal{A})$ as defined in \eqref{1eq: E, Eis} and prove
its asymptotic formula in \eqref{1eq: Eisenstein}.

We start with the Ramanujan identity 
\begin{equation*}
\frac{\sigma_{1-s} (n) }{\zeta(s)} =	\sum_{c} \frac {S(n, 0;c)} {c^s} , \qquad \text{($\mathrm{Re}(s) > 1$)} .
\end{equation*}
For $\mathrm{Re} (s)=1$, by the same argument of \S 3.13 by Lemma 3.12 in \cite{Titchmarsh-Riemann}, it follows that  
\begin{equation}\label{eq: c-truncate}
	\frac{\sigma_{1-s} (n) }{\zeta(s)} =\sum_{\log c\, \leqslant Y^\vepsilon} \frac{S(n,0;c)}{c^{s}} +O(n Y^{\vepsilon}/ Y),
\end{equation}
for any $\mathrm{Im} (s) \Lt Y$. 

Next, we expand and rewrite \eqref{1eq: E, Eis}  as
\begin{equation*}
	{T} (\mathcal{A}) = \frac{1}{\pi}\mathop{\sum\sum}_{m,n} a_m a_n \int h(t) \frac{\sigma_{2it}(m)\sigma_{-2it}(n)}{|\zeta(1+2it)|^2}\nd t.
\end{equation*}
Now let $s = 1 \pm 2 i t$,  $Y = M N $ in  \eqref{eq: c-truncate}, then we have
\begin{align}
	{T} (\mathcal{A}) = \frac{1}{\pi}\mathop{\sum\sum}_{m,n} a_m a_n X(m,n)+O\big(  N^{1+\vepsilon} \|\mathcal{A}\|^2\big), 
\end{align}
with 
\begin{align}\label{10eq: X(m,n)}
	X(m,n)=\mathop{{\sum\sum} }_{\log b,\log c \, \leqslant N^\vepsilon}\frac{S(m,0;b)S(n,0;c)}{bc}k^{}_{^\natural} (\log(b/c)),
\end{align}
\begin{align}\label{eq:kn (r)}
	k^{}_{^\natural} (r)
	=
	\int h(t)\exp(2it r)\,\nd t = 2 \sqrt{\pi} M \upbeta (M r) \cos (2T r);
\end{align}
in particular,
\begin{equation}
	k^{}_{^\natural} (0)=2\sqrt{\pi}M.
\end{equation}

According as \(b=c\) or \(b\ne c\) in the sum $X(m,n)$ in \eqref{10eq: X(m,n)}, we split
\begin{align}\label{10eq: T=T0+Tn}
	{T} (\mathcal{A}) = {T}_0 (\mathcal{A}) + {T}_{\natural} (\mathcal{A}) +O\big(  N^{1+\vepsilon} \|\mathcal{A}\|^2\big). 
\end{align}
We have
\begin{align*}
	T_0(\mathcal{A})=\frac{2 }{\sqrt{\pi}} M \mathop{\sum\sum}_{m,n} a_m a_n \sum_{\log c\, \leqslant N^\vepsilon} \frac{S(m,0;c)S(n,0;c)}{c^2}, 
\end{align*}
and if we estimate the tail of the $c$-sum by \eqref{eq: quad form,  Ramanujan}, then  
\begin{equation}\label{10eq: T0(A)}
	T_0(\mathcal{A})=\frac{2}{\sqrt{\pi}} \varSigma(\mathcal{A})+O\big(M\|\mathcal{A}\|^2\big).
\end{equation}
We have
\begin{equation*}
	T_{\natural}(\mathcal{A})\Lt M\mathop{\mathop{\mathop{\sum\sum}_{b\neq c}}_{\log b,\log c\, \leqslant N^\vepsilon}}_{|\log(b/c)|\leqslant N^{\vepsilon}/M}\mathop{\sum\sum}_{m,n}|a_ma_n|\frac{|S(m,0;b)S(n,0;c)|}{bc},
\end{equation*}
due to the exponential decay of $ k^{}_{^\natural} (r) $ as in \eqref{eq:kn (r)}. By the AM–GM inequality and by symmetry, the sum above is bounded by
\begin{align*}
	M \! \sum_{\log c\, \leqslant N^\vepsilon} \!   \frac{1}{c^2} \!\! \mathop{\sum_{b\neq c}}_{|\log(b/c)|\leqslant N^{\vepsilon}/M} \! \! \! \bigg( \sum_n |a_n S(n,0;c) | \bigg)^2 \! \Lt N^{\vepsilon} \! \sum_{\log c\, \leqslant N^\vepsilon} \! \frac{1}{c } \bigg( \sum_n |a_n S(n,0;c) | \bigg)^2.
\end{align*}
Thus an application of \eqref{eq: quad form,  Ramanujan} yields 
\begin{equation}\label{10eq: Tn(A)}
	T_{\natural}(\mathcal{A})\Lt N^{1+\vepsilon}\|\mathcal{A}\|^2.
\end{equation}
Finally, we conclude the proof of Proposition \ref{prop: Eisenstein} by \eqref{10eq: T=T0+Tn}--\eqref{10eq: Tn(A)}. 


\begin{thebibliography}{Luo2}
	
	\bibitem[DI]{DI-Nonvanishing}
J.-M. Deshouillers and H.~Iwaniec.
\newblock The nonvanishing of {R}ankin-{S}elberg zeta-functions at special
points.
\newblock   {\em The {S}elberg {T}race {F}ormula and {R}elated {T}opics
	({B}runswick, {M}aine, 1984)},  {Contemp. Math., vol. 53}, 
51--95. Amer. Math. Soc., Providence, RI, 1986.
	
	\bibitem[IL]{Iwaniec-Li-Ortho}
	H.~Iwaniec and X.~Li.
	\newblock The orthogonality of {H}ecke eigenvalues.
	\newblock {\em Compos. Math.}, 143(3):541--565, 2007.
	
	\bibitem[Iwa]{Iwaniec-Spectral-Weyl}
	H.~Iwaniec.
	\newblock The spectral growth of automorphic {$L$}-functions.
	\newblock {\em J. Reine Angew. Math.}, 428:139--159, 1992.
	
	\bibitem[Joh]{Faa-di-Bruno}
	W.~P. Johnson.
	\newblock The curious history of {F}a\`a di {B}runo's formula.
	\newblock {\em Amer. Math. Monthly}, 109\allowbreak(3):\allowbreak217--234, 2002.
	
	\bibitem[Jut]{Jutila-LS}
	M.~Jutila.
	\newblock On spectral large sieve inequalities.
	\newblock {\em Funct. Approx. Comment. Math.}, 28:7--18, 2000.
	
	\bibitem[Kuz]{Kuznetsov}
	N.~V. Kuznetsov.
	\newblock {P}etersson's conjecture for cusp forms of weight zero and {L}innik's
	conjecture. {S}ums of {K}loosterman sums.
	\newblock {\em Math. Sbornik}, 39:299--342, 1981.
	
	\bibitem[Luo1]{Luo-Twisted-LS}
	W.~Luo.
	\newblock The spectral mean value for linear forms in twisted coefficients of
	cusp forms.
	\newblock {\em Acta Arith.}, 70(4):377--391, 1995.
	
	\bibitem[Luo2]{Luo-LS}
	W.~Luo.
	\newblock Spectral mean-value of automorphic {$L$}-functions at special points.
	\newblock  {\em Analytic {N}umber {T}heory, {V}ol.\ 2 ({A}llerton {P}ark, {IL},
		1995)}, {Progr. Math.}, vol. 139,  621--632. Birkh\"auser
	Boston, Boston, MA, 1996.
	
	\bibitem[Qi]{Qi-GL(3)-Special-Points}
	Z.~Qi.
	\newblock The second moment of {$\mathrm{GL}_3 \times \mathrm{GL}_2$}
	{$L$}-functions at special points.
	\newblock {\em Math. Ann.}, 393\allowbreak(1):\allowbreak1429--1457, 2025.
	
	\bibitem[Tit]{Titchmarsh-Riemann}
	E.~C. Titchmarsh.
	\newblock {\em The {T}heory of the {R}iemann {Z}eta-{F}unction}.
	\newblock The Clarendon Press, Oxford University Press, New York, 2nd
	ed., 1986.
	\newblock Edited and with a preface by D. R. Heath-Brown.
	
	\bibitem[You]{Young-GL(3)-Special-Points}
	M.~P. Young.
	\newblock The second moment of {$GL(3)\times GL(2)$} {$L$}-functions at special
	points.
	\newblock {\em Math. Ann.}, 356(3):1005--1028, 2013.
	
\end{thebibliography}

\def\cprime{$'$}

\end{document}